\pgfplotsset{compat=newest}
\pgfplotsset{compat=newest}
\theoremstyle{definition} % Define theorem styles here based on the definition style (used for definitions and examples)
\theoremstyle{plain} % Define theorem styles here based on the plain style (used for theorems, lemmas, propositions)
\newtheorem{theorem}{Theorem}
\theoremstyle{remark} % Define theorem styles here based on the remark style (used for remarks and notes)
\newtheorem*{remark}{Remark}
\newcommand{\R}{\mathbb{R}}
\newcommand{\usol}{u}
\newcommand{\x}{x}
\newcommand{\eps}{\epsilon}
\newcommand{\Omeps}{\Omega_{\eps}}
\newcommand{\Om}{\Omega}
\newcommand{\Pv}{\vec{P}}
\newcommand{\nv}{\vec{n}}
\newcommand{\Vv}{\vec{V}}
\newcommand{\Qv}{\vec{Q}}
\newcommand{\rv}{\vec{R}}
\newcommand{\nab}{\nabla}
\newcommand{\Ga}{\Gamma_1}
\newcommand{\Gb}{\Gamma_2}
\newcommand{\Gd}{\Gamma_2}
\newcommand{\Ge}{\Gamma_3}
\newcommand{\Lag}{\mathcal{L}}
\newcommand*\diff{\mathop{}\!\mathrm{d}}
\newcommand\restr[2]{{% we make the whole thing an ordinary symbol
		\left.\kern-\nulldelimiterspace % automatically resize the bar with \right
		#1 % the function
		\vphantom{\big|} % pretend it's a little taller at normal size
		\right|_{#2} % this is the delimiter
}}
\newcommand{\Div}{\operatorname{div}}
\newcommand{\expnumber}[2]{{#1}\mathrm{e}{#2}}
\title{Shape Optimization for the Mitigation of Coastal Erosion via Shallow Water Equations}
\author{ \href{https://orcid.org/0000-0001-9930-065X}{\includegraphics[scale=0.06]{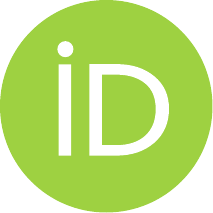}\hspace{1mm}Luka Schlegel} \\
	Department of Mathematics\\
	Universität Trier\\
	Universitätsring 15, 54296 Trier\\
	\texttt{schlegel@uni-trier.de} \\
	%% examples of more authors
	\And
	\href{https://orcid.org/0000-0001-7665-130X}{\includegraphics[scale=0.06]{orcid.pdf}\hspace{1mm}Volker Schulz} \\
	Department of Mathematics\\
	Universität Trier\\
	Universitätsring 15, 54296 Trier\\
	\texttt{volker.schulz@uni-trier.de} \\
	%% \AND
	%% Coauthor \\
	%% Affiliation \\
	%% Address \\
	%% \texttt{email} \\
	%% \And
	%% Coauthor \\
	%% Affiliation \\
	%% Address \\
	%% \texttt{email} \\
	%% \And
	%% Coauthor \\
	%% Affiliation \\
	%% Address \\
	%% \texttt{email} \\
}
\begin{document}
\maketitle

\begin{abstract}
Coastal erosion describes the displacement of land caused by destructive sea waves, currents or tides. Major efforts have been made to mitigate these effects using groins, breakwaters and various other structures. We address this problem by applying shape optimization techniques on the obstacles. We model the propagation of waves towards the coastline using two-dimensional shallow water equations with artificial viscosity. The obstacle's shape is optimized over an appropriate cost function to minimize the mechanical energy and to reduce velocities of water waves along the shore, without relying on a finite-dimensional design space, but based on shape calculus. 
\end{abstract}

\section{Introduction}
Coastal erosion describes the displacement of land caused by destructive sea waves, currents and/or tides. Major efforts have been made to mitigate these effects using groins, breakwaters and various other structures. 
Among experimental set-ups to model the propagation of waves towards a shore and to find optimal wave-breaking obstacles, the focus has turned towards numerical simulations due to the continuously increasing computational performance. Essential contributions to the field of numerical coastal protection have been made for steady \cite{Azerad2005}\cite{Mohammadi2008}\cite{Keuthen2015} and unsteady \cite{Mohammadi2011}\cite{Mohammadi2012} descriptions of propagating waves. In this paper we select one of the most widely applied system of wave equations. We describe the hydrodynamics by the set of Saint-Venant or better known as shallow water equations (SWE), that originate from the famous Navier-Stokes equations by depth-integration, based on the assumption that horizontal length-scales are much larger than vertical ones \cite{SaintVenant1871}. Calculating optimal shapes for various problems is a vital field, combining several areas of research. This paper builds up on the monographs \cite{Choi1987}\cite{Sokolowski1992}\cite{Delfour2011} to perform free-form shape optimization. In addition, we strongly orientate on \cite{Schulz2014b}\cite{Schulz2016}\cite{Schulz2016a} that use the Lagrangian approach for shape optimization, i.e. calculating state, adjoint and the deformation of the mesh via the volume form of the shape derivative assembled on the right-hand-side of the linear elasticity equation, as Riesz representative of the shape derivative. The calculation of the SWE continuous adjoint and shape derivative and its use in free-form shape optimization appears novel to us. However, we would like to emphasize, that the SWE have been used before in the optimization of practical applications, e.g. using discrete adjoints via automatic differentiation in the optimization of the location of tidal turbines \cite{Funke2014} and to optimize the shape of fish passages in finite design spaces \cite{AlvarezVazquez2006}\cite{Kadiri2019}.\\ The paper is structured as follows: In Section \ref{sec:PF} we formulate the PDE-constrained optimization problem. In Section \ref{sec:DerSha} we derive the necessary tools to solve this problem, by deriving adjoint equations and the shape derivative in volume form. The final part, Section \ref{sec:NumRes}, will then apply the results to firstly a simplified mesh and secondly to more realistic meshes, picturing first the Langue de Barbarie (LdB), a coastal section in the north of Dakar, Senegal that was severely affected by coastal erosion within the last decades and secondly a global illustration in the form of a spherical world mesh.

\section{Problem Formulation}\label{sec:PF}
%------------------------------------------------
Suppose we are given an open domain $\tilde{\Om}\subset\R^2$, which is split into the disjoint sets $\Om,D\subset\tilde{\Om}$ such that $\Om\cup D\cup\Ge=\tilde{\Om}$, $\Ga\cup\Gd=\partial\tilde{\Om}$.  We assume the variable, interior boundary $\Ge$ and the fixed outer $\partial\tilde{\Om}$ to be at least Lipschitz. One simple example of such kind is visualized below in Figure \ref{fig: domain}.

\begin{figure}[h]
	\centering
	\begin{tikzpicture}
	\begin{scope}
	\clip (0,0) rectangle (5,2.5);
	\draw (2.5,0) circle(2.5);
	\end{scope}

	\draw[->, >=stealth', shorten >=1pt] (2.33,0.7)   -- (2.0,1.1);
	\draw (0,0) --(5,0);
	\draw (2.5,0.5) circle (0.25);
	
	\node (A) at (3.5,1.2) {\large $\Omega$};
	\node (B) at (2.5,-0.3) {$\Ga$};
	\node (E) at (2.5,2.8) {$\Gd$};
	\node (F) at (3.1,0.5) {$\Ge$};
	\node (G) at (2.5,0.5) {$D$};
	\node (H) at (2.1,0.7) {$\nv$};
	\end{tikzpicture}
	\caption[Illustrative Domain]{Illustrative Domain $\Om$ with Initial Circled Obstacle $D$ and Boundaries $\Ga,\Gd$ and $\Ge$}
	\label{fig: domain}
\end{figure}

On this domain we model water wave and velocity fields as solution to SWE with artificial viscosity, i.e. 
\begin{equation}
\begin{aligned}
	\partial_tU+\nabla\cdot{F(U)}-\nab\cdot(G(\mu)\nab \hat{U})=S(U)\quad {}&{}\text{ in } \Om\times(0,T)\text{,}
	\label{Eq:1optprob}
\end{aligned}
\end{equation}
where we are given the SWE in vector notation with flux matrix 
\begin{equation}
\begin{aligned}
F(U)=\begin{pmatrix}
\Qv\\ 
\frac{\Qv}{H}\otimes \Qv+\frac{1}{2}gH^2\mathbf{I}_2
\end{pmatrix}
=
\begin{pmatrix}
Hu & vH\\	
Hu^2+\frac{1}{2}gH^2 & Huv \\
Huv & Hv^2 + \frac{1}{2}gH^2
\end{pmatrix}
\end{aligned}
\label{Eq:2Fluxes}
\end{equation}
for identity matrix $\mathbf{I}_2\in\R^{2\times2}$, gravitational acceleration $g$ and solution $U:\Om\times(0,T)\rightarrow\R\times\R^2$, where for simplicity the domain and time-dependent components are denoted by $U=(H,\Qv)=(H,Hu,Hv)$, with $H$ being the water height and $Hu,Hv$ the weighted horizontal and vertical discharge or velocity. For notational ease, we set $\hat{U}=(H+z,\Qv)$ for scalar sediment height $z:\Om\rightarrow\R$. The setting can be taken from Figure \ref{fig: SketchSWE}.
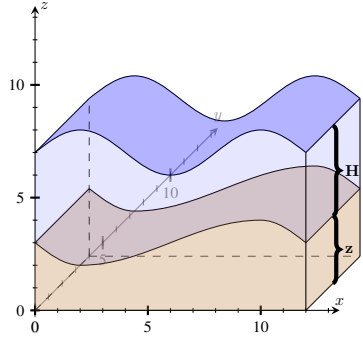
\begin{figure}[htb!]
	\centering
	\begin{tikzpicture}[x=0.5cm,y=0.5cm,z=0.3cm,>=stealth,scale=0.6, transform shape]
	% The axes
	\draw[->] (xyz cs:x=0) -- (xyz cs:x=13.5) node[above] {$x$};
	\draw[->] (xyz cs:y=0) -- (xyz cs:y=13.5) node[right] {$z$};
	\draw[->,opacity=0.4] (xyz cs:z=0) -- (xyz cs:z=13.5) node[above] {$y$};
	% The thin ticks
	\foreach \coo in {0,1,...,13}
	{
		\draw (\coo,-1.5pt) -- (\coo,1.5pt);
		\draw (-1.5pt,\coo) -- (1.5pt,\coo);
		\draw[opacity=0.4] (xyz cs:y=-0.15pt,z=\coo) -- (xyz cs:y=0.15pt,z=\coo);
	}
	% The thick ticks
	\foreach \coo in {0,5,10}
	{
		\draw[thick] (\coo,-3pt) -- (\coo,3pt) node[below=6pt] {\coo};
		\draw[thick] (-3pt,\coo) -- (3pt,\coo) node[left=6pt] {\coo};
		\draw[thick,opacity=0.4] (xyz cs:y=-0.3pt,z=\coo) -- (xyz cs:y=0.3pt,z=\coo) node[below=8pt] {\coo};
	}

	% Now the waves
	\draw[name path = A, black] (0,7) sin (2,8) cos(4,7) sin(6,6) cos(8,7) sin(10,8) cos(12,7);
	\draw[name path = B, black] (0,7,4) sin (2,8,4) cos(4,7,4) sin(6,6,4) cos(8,7,4) sin(10,8,4) cos(12,7,4);
	\draw[name path = C,black] (0,7) -- (0,7,4);
	\draw[name path = D,black] (12,7) -- (12,7,4);
	%\draw[blue] (0,7) sin (2,9);
	\tikzfillbetween[of=A and B]{blue, opacity=0.3};
	
	% Now the Sediment lower Level
	\draw[name path = E, black] (0,0) -- (12,0);
	\draw[name path = F, black,dashed,opacity=0.7] (0,0,4) -- (12,0,4);
	\draw[name path = G,black,dashed,opacity=0.7] (0,0) -- (0,0,4);
	\draw[name path = H,black] (12,0) -- (12,0,4);
	%\tikzfillbetween[of=E and F]{brown, opacity=0.3};
	
	% Sediment Upper Level
	\draw[name path = I, black] (0,3) sin(2,2) cos(6,3) sin(10,4) cos(12,3);
	\draw[name path = J, black] (0,3,4) sin(2,2,4) cos(6,3,4) sin(10,4,4) cos(12,3,4);
	\draw[name path = K,black] (0,3) -- (0,3,4);
	\draw[name path = L,black] (12,3) -- (12,3,4);
	%\draw[blue] (0,7) sin (2,9);
	\tikzfillbetween[of=I and J]{brown, opacity=0.3};
	\tikzfillbetween[of=E and I]{brown, opacity=0.3};
	%\tikzfillbetween[of=H and L]{brown, opacity=0.3};
	\tikzfillbetween[of=A and I]{blue, opacity=0.1};
	\tikzfillbetween[of=L and D]{blue, opacity=0.1};
	\tikzfillbetween[of=H and L]{brown, opacity=0.3};
	
	% Outer Edges
	\draw[ black] (12,0) -- (12,7);
	\draw[ black] (12,0,4) -- (12,7,4);
	\draw[ black, dashed,opacity=0.7] (0,0,4) -- (0,7,4);

	\draw [decorate,ultra thick,
	decoration ={brace}] (12,3,2) -- (12,0,2) 
	node[pos=0.5,right=4pt,black,thick]{$\mathbf{z}$} ;
	\draw [decorate,ultra thick,
	decoration = {brace}] (12,7,2) -- (12,3,2) 
	node[pos=0.5,right=4pt,black,thick]{$\mathbf{H}$} ;
	
	\end{tikzpicture}
	\caption[Sketch of Wave and Sediment]{Cross-Section for Identification of Wave Height $H$ and Sediment Height $z$}
	\label{fig: SketchSWE}
\end{figure}
The source term in (\ref{Eq:1optprob}) is defined as  
\begin{equation}
\begin{aligned}\
S(U)=
\begin{pmatrix}
0\\
-gH\frac{\partial z}{\partial x}-gHu\frac{\sqrt{u^2+v^2}}{KH^{4/3}}\\
-gH\frac{\partial z}{\partial y}-gHv\frac{\sqrt{u^2+v^2}}{KH^{4/3}}
\end{pmatrix}\text{,}
\end{aligned}
\label{Eq:3Sources}
\end{equation}
where the first term responds to variations in the bed slope and the second term is resembling the Manning formula to respond to bottom friction, where $K>0$ is Manning's roughness coefficient \cite[Section 3.3.2]{Chow1959}.
For the boundaries we use rigid-wall and outflow conditions for $\Ga,\Ge$ and $\Gd$ by setting the velocity in normal direction to zero and prescribing a water height $H_1$ at the boundary, such that
\begin{equation}
\begin{aligned}
& \Qv\cdot \nv=0, \nab (H+z)\cdot\nv=0,\nab Q_1\cdot\nv=0,\nab Q_2\cdot\nv=0 {}&{}\text{ on } &&\Ga,\Ge&\times(0,T)&&\\
&H=H_1, \nab Q_1\cdot \nv=0, \nab Q_2\cdot \nv=0 {}&{}\text{ on }&& \Gd&\times(0,T).&&
\end{aligned}
\end{equation}
 Initial conditions for $U$ are implemented by prescribing a fixed starting point $U_0$, i.e.
\begin{equation}
\begin{aligned}
&U=U_0 {}&{}\text{ in }&& \Om&\times\{0\}&&
\end{aligned}
\end{equation}

\begin{remark}
Original viscous SWE are an incomplete parabolic system, where viscosity is only placed on the momentum equation. To prevent shocks or discontinuities that can appear in the original formulation of the hyperbolic SWE even for continuous data in finite time, an additional viscous term is added in the continuity equation such that we obtain a set of fully parabolic equations. We control the amount of added diffusion by the diagonal matrix $G(\mu)=\sum_{i=1}^ne_i^T\mu e_i e_i^T$ with entries $\mu=(\mu_v,\mu_f)\in\R_+\times\R^2_+$ and basis vector $e_i\in\R^n$ with $n$ being the number of dimensions in vector $\mu$. In this setting $\mu_f$ is fixed, while we rely on shock detection in the determination of $\mu_v$ following \cite{Persson2006}.
	Ultimately, a physical interpretation can be obtained for the introduction of the viscous part in the conservation of momentum equations. However, $\mu_v$ is solely based on stabilization arguments, where we follow the justification as in \cite{Guba2014}. The complete parabolic problem together with well-posed boundary conditions \cite{Oliger1978} provides us with a well-posed problem.
\end{remark} 

We obtain a PDE-constrained optimization problem for objective
\begin{align}
J(\Om)=J_1(\Om)+J_2(\Om)+J_3(\Om)+J_4(\Om)+J_5(\Om)\text{,}\label{Eq:SWEObj}
\end{align}
where we are trying to minimize the mechanical wave energy of destructive waves at the shore $\Ga$, that are waves  above a critical threshold $H_{\text{cr}}>0$ \cite{Mohammadi2008}, over a time window $\tilde{T}\subset(0,T)$, i.e.
\begin{align}
J_1(\Om)=\int_{\tilde{T}}\int_{\Gamma_1}&\nu_1E\sigma_{\alpha}(H-H_{\text{cr}})\diff s \diff t
\label{Eq:1ObjSWE}
\end{align}
for mechancial wave energy $E=\frac{1}{8}\rho g H^2$ and reduction to destructive sea waves enforced by usage of the sigmoid function $\sigma_\alpha:\R\rightarrow\R$ with slope parameter $\alpha>0$.
In addition, we aim for zeroed  velocities
\begin{align}
J_2(\Om)=\int_0^T\int_{\Gamma_1}\frac{\nu_2}{2}||\Qv||_2^2\diff s\diff t.
\label{Eq:1Obj2}
\end{align}
These objectives are supplemented by a volume penalty and a perimeter regularization, i.e. 
\begin{align}
J_3(\Om)=-\nu_3\int_{\Om}1\diff x\text{,}
\label{Eq:2volSWE}
\end{align}
and
\begin{align}
J_4(\Om)=\nu_4\int_{\Gamma_{3}}1\diff s.
\label{Eq:1PeriSWE}
\end{align}
Additionally, a minimal thinness penalty on obstacle level is added by following \cite{Allaire2016} as
\begin{align}
J_5(\Om)=\nu_5\int_{\Gamma_3}\int_0^{d_{min}}\left[(d_{\Om}\left(x-\xi\vec{n}(x)\right))^+\right]^2\diff\xi \diff s\text{.}
\label{Eq:1Thick}
\end{align}
Here $d_\Om$ represents the signed distance function (SDF) with value 
\begin{align}
d_{\Om}(x)=\begin{cases}
d(x,\partial\Om)\quad &\text{ if } x\in \Om\\
0 \quad &\text{ if } x\in \partial\Om
\\
-d(x,\partial\Om)\quad &\text{ if } x\in \bar{\Om}^c\text{,}
\end{cases}
\label{Eq:SDF}
\end{align}
where the Euclidian distance of $x\in\R^d$ to a closed set $K\subset\R^d$ is defined as
\begin{align}
d(x,K)=\min_{y\in K}||x-y||_2
\label{Eq:6EuclMin}
\end{align}
for Euclidian distance $||.||_2$. The latter penalty can be justified by arguing, that an increased thinness would be undesirable with regards to the durability of the optimized shape. From a shape computational viewpoint, it ensures staying in the associated shape space. In numerics it prevents intersections of line segments, which may cause a breakdown of the optimization algorithm. In this light, we only take into account the positive part of the SDF of the offset value. Hence, we define for a real-valued function $f:\Om\rightarrow\R$ the positive part as
\begin{align}
f^+=\max(f(x),0)=\begin{cases}
f(x)\quad&\text{ if }f(x)>0\\
0\quad&\text{ otherwise }
\end{cases}\text{.}
\end{align}
 Finally, we would like to point out, that the objective is controlled by parameters $\nu_1,\nu_2,\nu_3,\nu_4$ and $\nu_5$ which need to be defined a priori (for further details cf. to Section  \ref{sec:NumRes}).
\begin{remark}
	The volume penalization could also be replaced by a geometrical constraint to meet a certain voluminous value, e.g. the initial size of the obstacle 
	\begin{align*}
	\int_\Om 1\diff x=\text{vol}(\Om)=\text{vol}(\Om_0)=\int_{\Om_0} 1\diff x\text{.}
	\end{align*}
	This approach would call for a different algorithmic handle, e.g. in \cite{Schulz2016a} an augmented Lagrangian is proposed.
\end{remark}

\section{Derivation of the Shape Derivative}\label{sec:DerSha}
We now fix notations and definitions in the first part, before deriving the adjoint equations and shape derivatives in the second part, that are necessary to solve the PDE-constrained optimization problem.
\subsection{Notations and Definitions}
The idea of shape optimization is to deform an object ideally to minimize some target functional. Hence, to find a suitable way of deforming we are interested in some shape analogy to classical derivatives. Here we use a methodology that is commonly used in shape optimization, extensively elaborated in various works \cite{Choi1987}\cite{Sokolowski1992}\cite{Delfour2011}.\\
In this section we fix notations and definitions following \cite{Schulz2016}\cite{Schulz2016a}, amending whenever it appears necessary.
We start by introducing a family of mappings $\{\phi_\eps\}_{\eps\in[0,\tau]}$ for $\tau>0$ that are used to map each current position $\x\in\Om$ to another by $\phi_\eps(\x)$, where we choose the vector field $\Vv$ as the direction for the so-called perturbation of identity
\begin{align}
\x_\eps=\phi_\eps(\x)=\x+\eps \Vv(\x)\text{.}
\label{Eq:4poi}
\end{align}
According to this methodology, we can map the whole domain $\Om$ to another $\Omeps$ such that 
\begin{align}
\Omeps=\{\x_\eps|x+\eps \Vv(x),x\in\Om\}\text{.}
\label{Eq:5domain}
\end{align}
We define the Eulerian Derivative as 
\begin{align}
DJ(\Om)[\Vv]=\lim_{\eps\rightarrow 0^+} \frac{J(\Om_\eps)-J(\Om)}{\eps}\label{Eq:7EulerDer}\text{.}
\end{align}
Commonly, this expression is called shape derivative of $J$ at $\Om$ in direction $\Vv$ and in this sense $J$ shape differentiable at $\Om$ if for all directions $\Vv$ the Eulerian derivative exists and the mapping $\Vv\mapsto DJ(\Om)[\Vv]$ is linear and continuous.
In addition, we define the material derivative of some scalar function $p:\Om\rightarrow\R$ at $x\in\Om$ by the derivative of a composed function $p_\eps\circ\phi_\eps:\Om\rightarrow\Om_\eps\rightarrow\R$ for $p_\eps:\Om_\eps\rightarrow\R$ as
\begin{align}
D_m p(x):=\lim_{\eps\rightarrow0^+}\frac{p_\eps\circ \phi_\eps(x)-p(x)}{\eps}=\frac{d}{d\eps}\restr{(p_\eps\circ \phi_\eps)(x)}{\eps=0^+}\label{Eq:8MatDer}
\end{align} 
and the corresponding shape derivative for a scalar $p$ and a vector-valued $\Pv$ for which the material derivative is applied component-wise as
\begin{align}
	Dp[\Vv]:=&D_mp-\Vv\cdot\nab p \label{Eq:9MatDer2}\\
	D\Pv[\Vv]:=&D_m\Pv-\Vv^T\nab \Pv \label{Eq:9MatDer2vec}\text{,}
\end{align}
where the distinction is that $\nab p$ is the gradient of a scalar and $\nab\Pv$ is the tensor derivative of a vector.
In the following, we will use the abbreviation $\dot{p}$ and $\dot{P}$ to mark the material derivative of $p$ and $P$. In Section \ref{sec:DerSha} we will need to have the following calculation rules on board \cite{Berggren2010}
\begin{align}
	D_m(pq)&=D_mpq+pD_mq\label{Eq:10MatProdR}\\
	D_m\nab p&=\nab D_mp-\nab \Vv^T\nab p\label{Eq:11MatGradR}\\
	D_m\nab \Pv&=\nab D_m\Pv-\nab \Vv^T\nab \Pv\label{Eq:11MatGradRvec}\\
	D_m(\nab q^T\nab p)&=\nab D_mp^T\nab q-\nab q^T(\nab \Vv+\nab \Vv^T)\nab p+\nab p^T\nab D_mq \label{Eq:12MatGradProdR}\text{.}
\end{align}
In addition, the basic idea in the proof of the shape derivative in the next section will be to pull back each integral defined on the transformed field back to the original configuration. We therefore need to state the following rule for differentiating domain integrals \cite{Berggren2010}
\begin{align}
	\frac{d}{d\eps}\restr{\left(\int_{\Om_{\eps}}p_\eps\diff x_\eps\right)}{\eps=0^+}=\int_\Om(D_mp+\nab\cdot \Vv p)\diff x \label{Eq:13DoaminR}\text{.}
\end{align}

\subsection{Shape Derivative}
From the discussion above, we define the derivative of some functional with respect to $\Om$ in the $\Vv$ direction that explicitly and implicitly depends on the domain $j(\Om,u(\Om))$ by
\begin{align}
Dj(\Om,u(\Om))[\Vv]=\frac{d}{d_\eps}j(\Omeps,u(\Omeps))|_{\eps=0}=D_1j(\Om,\usol(\Om))[\Vv]+D_2j(\Om,\usol(\Om))\dot\usol \label{Eq:14JDer}\text{,}
\end{align}
where 
\begin{align}
\dot\usol=\frac{d}{d_\eps}\usol(\Omeps)|_{\eps=0}\label{Eq:15uDer}\text{.}
\end{align}
The idea is to circumvent the derivative of $\usol$, which would imply one problem for each direction of $\Vv$ by solving an auxiliary problem \cite{Schulz2014b}.\\
Before defining this problem, we take care of the constraints (\ref{Eq:1optprob}) and formulate the Lagrangian
\begin{align}
\mathcal{L}(\Om,U,P) = J_{1,2}(\Om)+a(U,P)-b(P) \label{Eq:16LagSWE}\text{,}
\end{align}
where $J_{1,2}(\Om)=J_{1}(\Om)+J_{2}(\Om)$ consists of the first two objectives (\ref{Eq:1ObjSWE})-(\ref{Eq:1Obj2}), and $a(U,P)$ and $b(P)$ are obtained from the boundary value problem (\ref{Eq:1optprob}).
We rewrite the equations in weak form by multiplying with some arbitrary test function $P\in H^1(\Om\times(0,T))^3$ obtaining the form $a(U,P)=a(H,\Qv,p,\rv)$
\begin{equation}
\begin{aligned}
a(H,\Qv,p,\rv):=&\int_0^T\int_\Om\left[\frac{\partial H}{\partial t}+\nab\cdot\Qv\right]p\diff x\diff t\\
%---------------------------------
+&\int_0^T\int_\Om\left[\frac{\partial \Qv}{\partial t}+\nab\cdot\left(\frac{\Qv}{H}\otimes \Qv+\frac{1}{2}gH^2\mathbf{I}_2\right)\right]\cdot\rv\diff x\diff t\\
%---------------------------------
+&\int_0^T\int_\Om\mu_v\nab (H+z)\cdot\nab p\diff x\diff t-\int_0^T\int_{\Gd}\mu_v\nab (H_1+z)\cdot\nv p\diff s\diff t\\
%---------------------------------
+&\int_0^T\int_\Om G(\mu_f)\nab\Qv:\nab\rv\diff x\diff t+\int_0^T\int_\Om gH\nab z\cdot\rv\diff x\diff t\\
\label{Eq:17aweakSWE}
\end{aligned}
\end{equation}
and a zero perturbation term.
\begin{remark}
	For readability we left out the friction term, however up to some repetitive use of chain and product rule the handling stays the same as for the variations in the bed slope.
\end{remark}
\begin{remark}
	Here and in what follows we assume the flow to be free of discontinuities, e.g. induced by a discontinuous bottom profile $z$ or wave height $H$, which would prohibit us from performing adjoint-sensitivity analyses and ensuring the requirements in Theorem \ref{AdjointTheo} and \ref{ShapeDerTheo}.
\end{remark}
\begin{remark}
	To continue with adjoint calculations and to enforce initial and boundary conditions we are required to integrate by parts on the derivative-containing terms. 
\end{remark}
We obtain state equations from differentiating the Lagrangian with respect to $P$ and the auxiliary problem, the adjoint equations, from differentiating the Lagrangian with respect to the states $U$.
The adjoint is formulated in the following theorem: 
\begin{theorem} \label{AdjointTheo} 
	(Adjoint) Assume that the parabolic PDE problem (\ref{Eq:1optprob}) is $H^1$-regular, so that its solution $U$ is at least in $H^1(\Om\times(0,T))^3$. Then the adjoint in strong form (without friction term) is given by
	\begin{equation}
	\begin{aligned}
	-\frac{\partial p}{\partial t}+\frac{1}{H^2}(\Qv\cdot\nab)\rv\cdot\Qv-gH(\nab\cdot\rv) -\nab\cdot(\mu_v\nab p)+g\nab z\cdot \rv&=-\nu_1(E\sigma_\alpha)_{H,\Ga,\tilde{T}} \\
	-\frac{\partial\rv}{\partial t}-\nab p-\frac{1}{H}(\Qv\cdot\nab)\rv-\frac{1}{H}(\nab\rv)^T\Qv-\nab\cdot( G(\mu_f)\nab \rv)&=-\nu_2(\Qv)_{\Ga}
	\end{aligned}
	\label{Eq:19Adjoint}
	\end{equation}
	where we have on $\Gamma_1\times\tilde{T}$
	\begin{equation}
	\begin{aligned}
	(E\sigma_\alpha)_{H,\Ga,\tilde{T}} =2\frac{E}{H}\sigma_{\alpha}(H-H_{\text{cr}})+E\sigma_{\alpha}(H-H_{\text{cr}})(1-\sigma_{\alpha}(H-H_{\text{cr}}))
	\end{aligned}
	\end{equation}
		\text{such as final time conditions}
	\begin{equation}
	\begin{aligned}
	p&=0& \text{ in}&&\Om&\times\{T\}&&\\
	\rv&=0  & \text{ in}&&\Om&\times\{T\}&&
		\end{aligned} 
	\end{equation}
	and boundary conditions
		\begin{equation}
	\begin{aligned}
	\rv\cdot \nv=0, \nab p\cdot \nv=0,\nab \rv_1\cdot\nv=0, \nab \rv_2\cdot\nv&=0& \text{ on}&&\Ga,\Ge&\times(0,T)&&\\
	p\nv+\frac{1}{H_1}(\Qv\cdot\nv)\rv+\frac{1}{H_1}(\Qv\rv) \cdot \nv=0,\nab \rv_1\cdot\nv=0, \nab \rv_2\cdot\nv&=0 & \text{ on}&&\Gd&\times(0,T)\text{.}&&
	\end{aligned} 
	\label{Eq:20AdjointBC}
	\end{equation}
\end{theorem}

\begin{proof}
	See Appendix \ref{app:derivadj}
\end{proof}
The obtained adjoint equations can be written in vector form as 
\begin{align}
-\frac{\partial P}{\partial t}+AP_x+BP_y+CP-\nab\cdot(G(\mu)\nab P)=S\text{,} \label{Eq:AdjVectForm}
\end{align}
where
\begin{align}
A=\begin{pmatrix}
0&\frac{Q_1}{H^2}-gH&\frac{Q_1Q_2}{H^2}\\
-1&-2\frac{Q_1}{H}&-\frac{Q_2}{H}\\
0&0&-\frac{Q_1}{H}
\end{pmatrix},\quad	
B=\begin{pmatrix}
0&\frac{Q_1Q_2}{H^2}&\frac{Q_2^2}{H^2}-gH\\
0&-\frac{Q_2}{H}&0\\
-1&-\frac{Q_1}{H}&-2\frac{Q_2}{H}
\end{pmatrix}
\end{align}
and $C$ originates from variations in the sediment in (\ref{Eq:3Sources}) such that
\begin{align}
C=\begin{pmatrix}
0&g\frac{\partial z}{\partial x}&g\frac{\partial z}{\partial y}\\
0&0&0\\
0&0&0
\end{pmatrix}\text{.}
\label{AdjVectorNotSource}
\end{align}
Finally, $S$ corresponds to the right hand-side of (\ref{Eq:19Adjoint}).
\begin{remark}
	If one desires to include additional sources, e.g. accounting for sediment friction, $C$ from (\ref{AdjVectorNotSource}) would need to be adjusted. 
\end{remark}
\begin{remark}
	Shape derivatives can for a sufficiently smooth domain be described via boundary formulations using Hadamard's structure theorem \cite{Sokolowski1992}. The integral over $\Om$ is then replaced by an integral over $\Ge$ that acts on the associated normal vector. In this paper, we will only consider the volume form, which will be then used to obtain smooth mesh deformations from a Riesz projection of this shape derivative.
\end{remark}

\begin{theorem}\label{ShapeDerTheo}
		(Shape Derivative)
     Assume that the parabolic PDE problem (\ref{Eq:1optprob}) is $H^1$-regular, so that its solution $U$ is at least in $H^1(\Om\times(0,T))^3$. Moreover, assume that
		the adjoint equation (\ref{Eq:19Adjoint}) admits a solution $P\in H^1(\Om\times(0,T))^3$. Then the
			shape derivative of the objectives $J_{1,2}$ (without friction term) at $\Om$ in the
			direction $\Vv$ is given by
	\begin{equation}
	\begin{aligned}
	DJ_{1,2}(\Om)[\Vv]=\int_{0}^{T}\int_{\Om}&\Big[-(\nab \Vv)^T:\nab \Qv p - (\nab \Vv)^T:\nab \Qv\frac{\Qv}{H}\cdot \rv- (\nab \Vv\Qv\cdot\nab)\frac{\Qv}{H}\cdot \rv \\ 
	-& gH(\nab \Vv)^T\nab H \cdot \rv-\mu_v\nab (H+z)^T(\nab \Vv +\nab \Vv^T)\nab p  \\
	-&G(\mu_f)\nab \Qv \nab \Vv:\nab \rv -G(\mu_f)\nab\Qv\nab \Vv^T:\nab\rv  \\
	-&gH\nab \Vv^T\nab z\cdot\rv+\Div(\Vv)\Big\{\frac{\partial H}{\partial t}p+\nab\cdot \Qv p+\frac{\partial \Qv}{\partial t}\cdot \rv\\
+	& (\Qv\cdot \nab)\frac{\Qv}{H}\cdot \rv +\nab\cdot \Qv\frac{\Qv}{H}\cdot \rv +  \frac{1}{2}g\nab H^2\cdot \rv +gH\nab z\cdot\rv\\
	+&\mu_v\nab (H+z)\cdot\nab p +G(\mu_f)\nab \Qv : \nab \rv \Big\}\Big]\diff x\diff t\text{.}
	\end{aligned}
	\label{Eq:21SDSWE}
	\end{equation}
\end{theorem}

\begin{proof}
	See Appendix \ref{app:derivsha}
\end{proof}

The shape derivatives of the penalty terms (volume, perimeter and thickness) are obtained as, see e.g. \cite{Sokolowski1992}\cite{Welker2017}
\begin{align}
DJ_{3}(\Om)[\Vv]&=\nu_3\int_{\Om}-\nab\cdot\Vv\diff x\\
DJ_{4}(\Om)[\Vv]&=\nu_4\int_{\Ge}\kappa_m\langle \Vv,\nv\rangle \diff s\label{Eq:30DJreg}
\end{align}
and see \cite{Allaire2016} for
\begin{equation}
\begin{aligned}
&DJ_5(\Om)[\Vv]&=\nu_5\int_{\Ge}\int_0^{d_{min}}\Big[&\Vv(x)\cdot\vec{n}(x)\Big\{\kappa_m(x)(d_{\Om}\left(x_m\right)^+)^2&\\
&&+&2d_{\Om}(x_m)^+\nab d_\Om(x_m)\cdot\nab d_\Om(x)\Big\}&\\
&&-&\Vv(p_{\partial\Om}(x_m))\cdot\vec{n}(p_{\partial\Om}(x_m))2(d_{\Om}(x_m))^+\Big]\diff\xi \diff s&
\label{Eq:ShdThick}
\end{aligned}
\end{equation}
for mean curvature $\kappa_m$, and offset point $x_m=x-\xi\nv(x)$, where we require the shape derivative of the SDF \cite{Allaire2016}
\begin{align}
Dd_\Om(x)[\Vv]=-\Vv(p_{\partial\Om}(x))\cdot\vec{n}(p_{\partial\Om}(x))\label{EqSDtoSDF}
\end{align}
with operator $p_{\partial\Om}$ that projects a point $x\in\Om$ onto its closest boundary and holds for all $x\notin\Sigma$, where $\Sigma$ is referred to as the ridge, where the minimum in (\ref{Eq:6EuclMin}) is obtained by two distinct points.

\section{Numerical Results} \label{sec:NumRes}
We now first discuss the implementation in detail, before applying these techniques to selected examples in the following subsections.
\subsection{Implementation Details}
We rely on the classical structure of adjoint-based shape optimization algorithms shortly sketched in the algorithm below.
\begin{algorithm}[htb!]
	\caption{Shape Optimization Algorithm}
	\begin{algorithmic}
		\STATE Initialization
		\WHILE  {$||DJ(\Om_k)[\Vv]||>\eps_{TOL}$}
		\STATE 1. Calculate SDF $w_k$ [via AABBT]
		\STATE 2. Calculate State $U_k$ [via (\ref{Eq:22DGDiscretization})]
		\STATE 3. Calculate Adjoint $P_k$ [via (\ref{Eq:22DGDiscretization})]
		\STATE 4. Calculate Gradient $W_k$ [via $DJ_{1,2,3,4,5}(\Om)[\Vv]$ \& Linear Elasticity (\ref{Eq:29LinearElasticity})]
		\STATE 5. Perform Linesearch for $\tilde{W}_k$
		\STATE 6. Calculate $\Om_{k+1}$ [via $\tilde{W}_k$ and (\ref{Eq:5domain})]
		\ENDWHILE
	\end{algorithmic}
\end{algorithm}
The solution to the SDF in (\ref{Eq:1Thick}) is mesh dependent. For a mesh with undiscretized obstacle the SDF is approximated based on axes-aligned-bounding-boxes trees (AABBT) \cite{FEniCS2015} on a background mesh. We refer to Figure \ref{fig:SDF-Comparison} for an exemplifying visualization. Note, we have highlighted the initial boundary mesh points in red, exemplifying offset points in blue such as mesh and background mesh in the left figure and due to visibility, the distance of background nodes to the nearest exterior boundary point of the original mesh in the right figure.
\begin{figure}[h]
\begin{center}
	\includegraphics[scale=0.3]{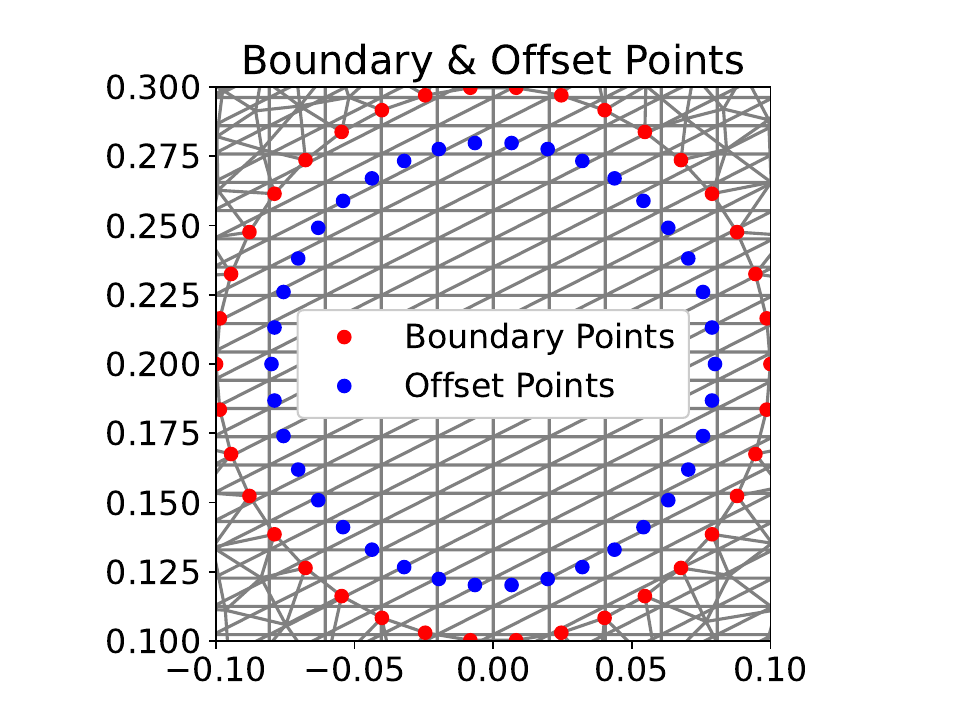}
	\includegraphics[scale=0.3]{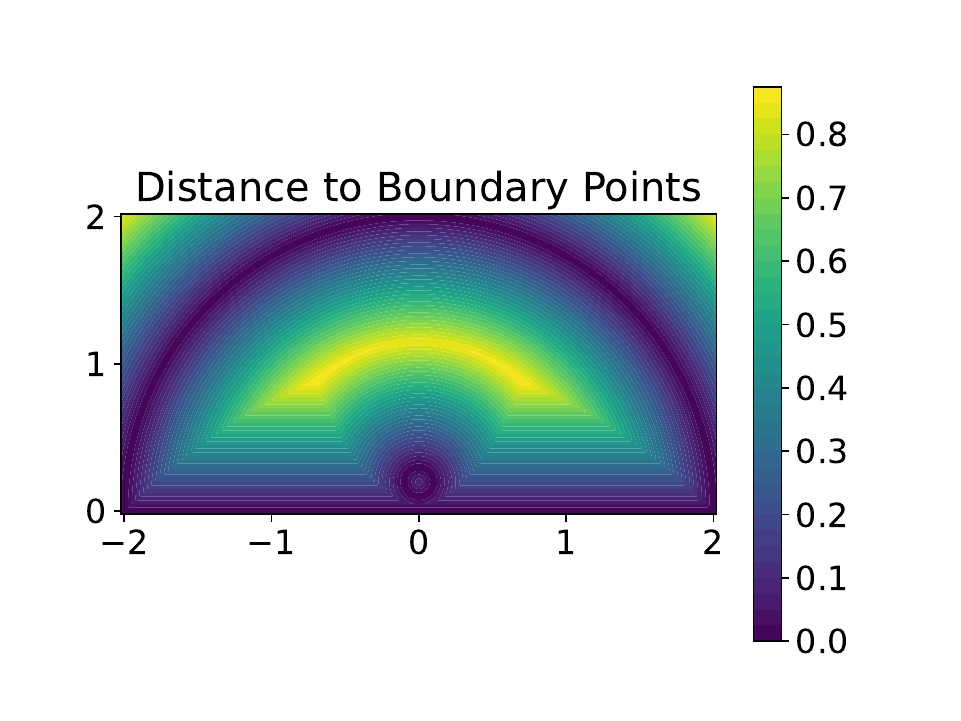}
\end{center}
\vspace{-0.5cm}
\caption[SDF]{1.: Boundary and Offset Points on Mesh and Background Mesh, 2.: Distance to Boundary Points via AABBT}
\label{fig:SDF-Comparison}
\end{figure}

We solve the boundary value problem (\ref{Eq:1optprob}), the adjoint problem (\ref{Eq:19Adjoint}) and the deformation of the domain with the help of the finite element solver FEniCS \cite{FEniCS2015}. For the time discretization we can choose between implicit and explicit integration arising from theta-methods \cite{Beam1982}.
High accuracy even for the inviscid and hyperbolic PDE, i.e. $\mu=0$, is achieved using a discontinuous Galerkin (DG) method to discretize in space \cite{Aizinger2002}\cite{Karna2011}\cite{Khan2014}. This implies discontinuous cell transitions, and hence a formulation based on each element $\kappa\in\mathcal{T}_h$ or facet $\Gamma_I$ for a subdivision $\mathcal{T}_h$ of some domain $\Om$, such as a redefinition of each function and operator on the so-called broken and possibly vector-valued $d$-dimensional Sobolov space $\mathcal{H}^1(\mathcal{T}_h\times(0,T))^d$. In this light, we also need to define the average $\{\!\{U\}\!\}=(U^++U^-)/2$ and jump term $\underline{[\![U]\!]}=U^+\otimes n_++U^-\otimes n_-$ to express fluxes on cell transitions. The discretization then reads for solution and test-function $U_h,P_h$ from some finite element approximation space of $\mathcal{H}^1(\mathcal{T}_h\times(0,T))^3$ for an SIPG scheme as \cite{Hartmann2008}\cite{Houston2018}\begin{equation}
\begin{aligned}
F_h(U_h,P_h)=&\int_{0}^{T}\int_{\Om}\Big[\frac{\partial U_h}{\partial t}\cdot P_h-F(U_{h}):\nab_hP_h+G(\mu)\nab_h(\hat{U}_h):\nab_hP_h\\
-&S(U_h)\cdot P_h\Big] \diff x\diff t+\int_{0}^{T}\sum_{\kappa\in\mathcal{T}_h}\int_{\partial\kappa\setminus\Gamma}\mathcal{F}(U^{+}_h,U^{-}_h,\nv)\cdot P^{+}_h\diff s\diff t\\
+&\int_{0}^{T}\int_{\Gamma_I}\Big[\underline{\delta}_h:\underline{[\![P_h]\!]}-\{\!\{G(\mu)\nab_h (P_h)\}\!\}:\underline{[\![\hat{U}_h]\!]}\\
-&\{\!\{G(\mu)\nab_h( \hat{U}_h)\}\!\}:\underline{[\![P_h]\!]}\Big] \diff s\diff t+N_{\Gamma,h}(U_h,P_h)=0\text{,}
\label{Eq:22DGDiscretization}
\end{aligned}
\end{equation}
where the numerical flux function $\mathcal{F}(U^{+}_h,U^{-}_h,\nv)$ defines the fluxes at the discontinuous cell transitions, incorporating specific quantities at the respective boundaries. For the advective flux and for a given flux Jacobian $\mathcal{J}_i:=\partial_UF_i(U)$ and matrix $B(U,\nv)=\sum_{i=1}^{2}n_i\mathcal{J}_i(U)$ we can choose between a variety of numerical fluxes \cite{Aizinger2002}, e.g.\\

(Local) Lax-Friedrichs Flux:
\begin{equation}
\begin{aligned}
\mathcal{F}_{1}(U^+,U^-,\nv)|\partial\kappa=\frac{1}{2}\left(F(U^+)\cdot \nv+F(U^-)\cdot \nv+\alpha_{\max}(U^+-U^-)\right)\text{,}	\end{aligned}
\label{Eq:23LaxFrFlux}
\end{equation}
where $\alpha_{\max}=\max_{V=U^+,U^-}\{|\lambda(B(V,\nv_\kappa))|\}$ with $\lambda(B(V,\nv_\kappa))$ returning a sequence of eigenvalues for the matrix $B$ restricted on a side of element $\kappa$.\\

HLLE Flux:
\begin{equation}
\begin{aligned}
\mathcal{F}_{2}(U^+,U^-,\nv)|\partial\kappa=\frac{1}{\lambda^+-\lambda^-}\left(\lambda^+F(U^+)\cdot\nv-\lambda^-F(U^-)\cdot \nv-\lambda^+\lambda^-(U^+-U^-)\right)	\text{,}\end{aligned}
\label{Eq:24HLLEFlux}
\end{equation}
where $\lambda^+=\max(\alpha_{\max},0)$ and $\lambda^-=\min(\alpha_{\min},0)$, for $\alpha_{\min}$ defined in accordance with $\alpha_{\max}$.
The required SWE Jacobian is written as
\begin{equation}
\begin{aligned}
\hspace{-0.8cm}
\mathcal{J}_1(U)=
\begin{pmatrix}
0&1&0\\
-\frac{Q_1^2}{H^2}+gH&2\frac{Q_1}{H}&0\\
-\frac{Q_1Q_2}{H^2}&\frac{Q_2}{H}&\frac{Q_1}{H}\\
\end{pmatrix}
\quad \mathcal{J}_2(U)=
\begin{pmatrix}
0&0&1\\
-\frac{Q_1Q_2}{H^2}&\frac{Q_2}{H}&\frac{Q_1}{H}\\
-\frac{Q_2^2}{H^2}+gH&0&2\frac{Q_2}{H}
\end{pmatrix} \text{.}
\end{aligned}
\label{Eq:25Jac}
\end{equation}
Hence, we obtain the following eigenvalues, where $c=\sqrt{gH}$ denotes the wave celerity \cite{Aizinger2002}

\begin{equation}
\begin{aligned}
\lambda(n_1\mathcal{J}_1+n_2\mathcal{J}_2)&=\{\lambda_{1},\lambda_2,\lambda_3\}
\\&=\{un_1+vn_2-c,un_1+vn_2,un_1+vn_2+c\}\text{.}
\end{aligned}
\label{Eq:26EV}
\end{equation}
\begin{remark}
	From (\ref{Eq:26EV}) also the hyperbolicy for the shallow water system is obtained, i.e. $\lambda_{i}\in\R$ for $i\in\{1,...,3\}$. In addition if $c\neq0$ or $H>0$, we obtain distinct eigenvalues, which lead to strict hyperbolicy.
\end{remark}
\begin{remark}
For a mesh with discretized obstacle and suitable transitional boundaries the SDF can be based on the solution of the diffusive Eikonal Equation with $f(x)=1$, $q(x)=0$
	\begin{equation}
	\begin{aligned}
	|\nab w(x)|-\mu_{SDF}\Delta w(x)&=f(x) \quad &x&\in\Om\\
	w(x)&=q(x)\quad &x&\in\partial\Om\text{,}
	\label{Eq:Eikonal}
	\end{aligned}
	\end{equation}
	written in weak form as
	\begin{equation}
	\begin{aligned}\int_\Om\sqrt{\nab w\cdot\nab w}v\diff x-\int_\Om fv\diff x +\int_\Om\mu_{SDF} \nab w\cdot\nab v\diff x=0\text{,}
	\label{Eq:EikonalViscous}
	\end{aligned}
	\end{equation}
where $w\in H^1(\Om)$ for all $v\in H^1(\Om)$ and $\mu_{SDF}=\max_i h_i$ is dependent on the cell-diameter $h_i$ for the $i^{th}$ cell $\kappa_i\subset\Om$ for $i\in\{1,...,m\}$. In this setting, the diffusive Eikonal equation can serve as an additional constraint to  (\ref{Eq:SWEObj}) and be considered in adjoint-based shape optimization.
\end{remark}
\begin{remark}
	In the presence of sources, especially for a discontinuous sediment $z$, a well-balanced numerical scheme is only obtained by methods of flux balancing. For this, the method presented in \cite{Wang2006} is extended to two dimensions. In addition, diffusive terms introduced in (\ref{Eq:1optprob}) cancel naturally in still water conditions. Finally respectively (\ref{Eq:23LaxFrFlux}) and (\ref{Eq:24HLLEFlux}) are redefined.
\end{remark}
In (\ref{Eq:22DGDiscretization}) we define the penalization term for the viscous fluxes as
\begin{equation}
\begin{aligned}
\underline{\delta}_h(\hat{U}_h)=C_{IP}\frac{k^2}{h}\{\!\{G(\mu)\}\!\}\underline{[\![\hat{U}_h]\!]}\text{,}
\end{aligned}
\label{Eq:27viscousflux}
\end{equation}
where $C_{IP}>0$ is a constant, $k>0$ the polynomial order of the DG method and $h>0$ the ratio of the cell volume and the facet area.
What is remaining in (\ref{Eq:22DGDiscretization}) is the specification of the boundary term, here we state that
\begin{equation}
\begin{aligned}
N_{\Gamma,h}(U_h,P_h)=&\int_{0}^T\int_{\Gamma}\mathcal{F}(U^{+}_h,U_\Gamma(U^{+}_h),\nv)\cdot P^{+}_h\diff s\diff t\\
+&\int_{0}^T\int_{\Gamma_N}\Big[\underline{\delta}_\Gamma (\hat{U}_h^+):P_h\otimes \nv+ G(\mu^+)\nab_h(\hat{U}^+_h):P_h^+\otimes\nv\\
-&G(\mu^+)\nab_hV_h^+:(\hat{U}_h^+-U_{\Gamma}(\hat{U}_h^+))\otimes \nv\Big] \diff s\diff t\text{,}
\end{aligned}
\label{Eq:28DGDiscretizationBoundary}
\end{equation}
where $\Gamma_N$ are all boundaries of type Neumann. Additionally, we define
\begin{align}
\underline{\delta}_\Gamma(U_h^+)&=C_{IP}G(\mu^+)\frac{k^2}{h}(U_h^+-U_{\Gamma}(U_h^+))\otimes\nv\\
\mathcal{F}(U^{+}_h,U_\Gamma(U^{+}_h),\nv)&=\frac{1}{2}[\nv\cdot F(U_h^+)+\nv\cdot F(U_{\Gamma}(U_h^+))]\text{.}
\end{align} For the pure advective SWE open and rigid-wall boundary functions are defined as in \cite{Aizinger2002}.
Having obtained a discretized solution for the forward problem, we calculate the SWE adjoint problem in the same manner using a DG discretization in space and a member of the theta-method for the time discretization. For this we rewrite the vector form of the SWE adjoint (\ref{Eq:AdjVectForm}) with the help of the product rule, i.e.
\begin{align}
\frac{\partial P}{\partial t}-\nab\cdot(AP,BP)-\tilde{C}P+\nab\cdot(G(\eps)\nab P)=-S \text{,}
\end{align}
where $\tilde{C}$ is defined to be
\begin{align}
\tilde{C}=C-A_x-B_y\text{.}
\end{align} 
The following theorem provides us then with the necessary eigenvalues of the adjoint flux Jacobian $\mathcal{J}^*_i:=\partial_PF^*_i(P)=-\partial_P(AP,BP)$.
\begin{theorem}(Eigenvalues of the Adjoint Flux Jacobian)
	The eigenvalues of matrix $B^*(P,\nv)$ belonging to the adjoint flux Jacobian $\mathcal{J}^*_i:=\partial_PF^*_i(P)$ equal the eigenvalues of matrix $B(U,\nv)$ belonging to the flux Jacobian $\mathcal{J}_i:=\partial_UF_i(U)$. \label{Theo:AdjointFlux}
\end{theorem}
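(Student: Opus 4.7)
The plan is to reduce the statement to the elementary linear-algebra fact that a matrix and its transpose share the same characteristic polynomial. The key observation I would exploit is that the matrices $A$ and $B$ appearing in the vector form of the adjoint equation (\ref{Eq:AdjVectForm}) are, up to a sign, the transposes of the primal flux Jacobians $\mathcal{J}_1$ and $\mathcal{J}_2$ displayed in (\ref{Eq:25Jac}). This is exactly the structural reason why adjoint systems of linearized conservation laws inherit the hyperbolicity of the primal system: an integration by parts turns $\nabla\cdot(F'(U)\,\delta U)$ into $-(F'(U))^{T}\nabla(\cdot)$ on the adjoint variable.

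Step one is purely computational: I would compare $A$ entry-by-entry with $-\mathcal{J}_1^{T}(U)$, and $B$ with $-\mathcal{J}_2^{T}(U)$, using the explicit formulas already present in the paper. Both matches are immediate (modulo checking one potentially typographic entry in $A$), so one obtains the identification $A=-\mathcal{J}_1^{T}$ and $B=-\mathcal{J}_2^{T}$.

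Step two uses the definition $\mathcal{J}^{*}_i(P)=\partial_P F^{*}_i(P)=-\partial_P(AP,BP)_i$, which gives $\mathcal{J}^{*}_1=-A$ and $\mathcal{J}^{*}_2=-B$. Hence
\begin{align*}
B^{*}(P,\nv)\;=\;n_1\mathcal{J}^{*}_1+n_2\mathcal{J}^{*}_2\;=\;-(n_1 A+n_2 B)\;=\;n_1\mathcal{J}_1^{T}+n_2\mathcal{J}_2^{T}\;=\;\bigl(n_1\mathcal{J}_1+n_2\mathcal{J}_2\bigr)^{T}\;=\;B(U,\nv)^{T}.
\end{align*}
Since $\det(M-\lambda I)=\det(M^{T}-\lambda I)$ for any square matrix $M$, the matrices $B^{*}(P,\nv)$ and $B(U,\nv)$ have identical characteristic polynomials and therefore identical spectra, namely the eigenvalues already listed in (\ref{Eq:26EV}).

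The main obstacle is not conceptual but bookkeeping. One must keep track of the sign introduced by integration by parts that underlies the derivation of (\ref{Eq:AdjVectForm}) and not confuse the adjoint flux $F^{*}$ with its negative, and one has to verify the entries of $A,B$ against $-\mathcal{J}_1^{T},-\mathcal{J}_2^{T}$ carefully. Once the identity $B^{*}=B^{T}$ is established, the eigenvalue statement is a one-line consequence.
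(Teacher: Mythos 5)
Your proposal is correct and follows essentially the same route as the paper's own proof: both identify $B^{*}(P,\nv)$ with the transpose of $B(U,\nv)$ (the paper attributes this to the linearity of the adjoint system) and then invoke the invariance of the characteristic polynomial, i.e.\ of the spectrum, under transposition. Your version merely makes the sign bookkeeping and the entry-by-entry verification $A=-\mathcal{J}_1^{T}$, $B=-\mathcal{J}_2^{T}$ explicit, correctly flagging the apparent typo in the $(1,2)$ entry of $A$, where $\frac{Q_1}{H^2}$ should read $\frac{Q_1^2}{H^2}$.
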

\begin{proof}
	\begin{align}
	\lambda(B(U,\nv))=\lambda(\sum_{i=1}^{2}\nv_i\partial_UF_i(U))=\lambda(\sum_{i=1}^{2}n_i\partial_PF^*_i(P))=\lambda(B^*(P,\nv))
	\end{align}
	since $\sum_{i=1}^{2}n_i\partial_UF_i(U)=\sum_{i=1}^{2}n_i\partial_PF^*_i(P)^T$ which is due to the linearity of the adjoint system. The determinant-invariance of the transpose-operator then leads to the assertion.
\end{proof}
\begin{remark}
	The theorem above also provides us with hyperbolicy for the adjoint system. However, the linearity would essentially enable us to solve the system with less expensive methods, which could result in less degrees of freedom. We furthermore highlight that Theorem \ref{Theo:AdjointFlux} provides us with stability of the numerical scheme for the adjoint equations as well, e.g. if we have chosen the time steps in accordance with the CFL-condition for explicit time-integration in the forward problem.
\end{remark}

Updating the finite element mesh in each iteration is done via the solution $\vec{W}:\Om\rightarrow\R^2$ of the linear elasticity equation \cite{Schulz2016}
\begin{equation}
\begin{aligned}
\int_\Om\sigma(\vec{W}):\eps(\Vv)\diff x&=DJ(\Om)[\Vv] \hspace{1cm} \quad &\forall\Vv\in H_0^1(\Om,\R^2)\\
\sigma:&=\lambda_{elas} Tr(\eps(\vec{W}))I+2\mu_{elas}\eps(\vec{W})\\
\eps(\vec{W}):&=\frac{1}{2}(\nab \vec{W}+\nab \vec{W}^T)\\
\eps(\Vv):&=\frac{1}{2}(\nab\Vv+\nab\Vv^T)\text{,}
\end{aligned}
\label{Eq:29LinearElasticity}
\end{equation}
where $\sigma$ and $\eps$ are called strain and stress tensor and $\lambda_{elas}$ and $\mu_{elas}$ are called Lamé parameters. In our calculations we have chosen $\lambda_{elas}=0$ and $\mu_{elas}$ as the solution of the following Poisson problem
\begin{equation}
\begin{aligned}
-\bigtriangleup\mu&=0 \hspace{1cm} &&\text{in }&& \Om&&\\
\mu&=\mu_{max} \hspace{1cm} &&\text{on }&& \Ge&&\\
\mu&=\mu_{min} \hspace{1cm} &&\text{on }&& \Ga, \Gb\text{.}&&
\end{aligned}
\label{Eq:33Lame}
\end{equation}
The source term $DJ(\Om)[\Vv]$ in (\ref{Eq:29LinearElasticity}) consists of a volume and surface part, i.e. $DJ(\Om)[\Vv]=DJ_\Om[\Vv] + DJ_{\Ge}[\Vv]$. 
Here the volumetric share comes from our SWE shape derivative w.r.t. the first two objectives and the penalty on the volume, where we only assemble for test vector fields whose support intersects with the interface $\Ge$ and is set to zero for all other basis vector fields \cite{Welker2017}. 
The surface part comes from the parameter regularization and the minimum thinness penalty (\ref{Eq:1Thick}), where we have implemented the numerical attractive equivalent formulations
\begin{align}
DJ_4(\Om)[\Vv]=\nu_4\int_{\Ge}\Big[\nab\cdot \Vv-\langle \frac{\partial \Vv}{\partial \nv},\nv\rangle\Big]\diff s
\end{align}
and
\begin{equation}
\begin{aligned}
&DJ_5(\Om)[\Vv]&=\nu_5\int_{\Ge}\int_0^{d_{min}}\Big[&\Vv\cdot\Big\{\nab(d_{\Om}\left(x_m\right)^+)^2)-\langle\nab(d_{\Om}\left(x_m\right)^+)^2),\nv\rangle\nv\Big\}&\\
&&&+(d_{\Om}\left(x_m\right)^+)^2\Big\{\nab\cdot \Vv-\langle \frac{\partial \Vv}{\partial\nv},\nv\rangle\Big\}&\\
&&&+\Vv\cdot\vec{n}\Big\{2d_{\Om}(x_m)^+\nab d_\Om(x_m)\cdot\nv\Big\}&\\
&&&-\Vv(p_{\partial\Om}(x_m))\cdot\vec{n}(p_{\partial\Om}(x_m))2(d_{\Om}(x_m))^+\Big]\diff\xi \diff s.&
\end{aligned}
\end{equation}
In order to guarantee the attainment of useful shapes, which minimize the objective, a backtracking line search is used, which limits the step size in case the shape space is left \cite{Welker2017}, i.e. having intersecting line segments or in the case of a non-decreasing objective evaluation. As described in the algorithm before, the iteration is finally stopped if the norm of the shape derivative has become sufficiently small.

\subsection{Ex.1: The Half-Circled Mesh}\label{sec:exhalfcircled}
In the first example, we will look at the model problem - the half circle that was described in Section \ref{sec:PF}. The associated mesh is displayed in Figure \ref{fig:SWEInitHalfCircle} and was created using the finite element mesh generator GMSH \cite{Geuzaine2009}, we have meshed finer around the obstacle to ensure a high resolution. We set Gaussian initial conditions as $\hat{U}_0=(1+\exp(-15x^2 - 15(y-1)^2),0,0)$, which result in a wave travelling in time towards the boundaries.  As before, we interpret $\Ga,\Gd,\Ge$ as coastline, open sea and obstacle boundary. Accordingly, we prescribe the boundary conditions using rigid-wall conditions on $\Ga,\Ge$ and outflow boundaries on $\Gd$. The parameters in the shallow water system are set as follows: For the weight of the diffusion terms in the momentum equation we set $\mu_f=(0.01,0.01)$ and determine $\mu_v$ by the usage of the mentioned shock detector \cite{Persson2006}. The gravitational acceleration is fixed at roughly $9.81$ and the parameter $K$ in Manning's formula is at $0.049$ for a sandy beach. Our calculations are performed for two test cases - a linear decreasing bottom $z=0.5-0.25y$ and a non-flat bottom determined by a Gaussian peak $z=\exp(-6(x-0.5)^2 - 6(y-0.2)^2)$, as displayed in Figure \ref{fig:SWEInitHalfCircle}.  We are targeting a minimal mechanical wave energy for waves above the water's rest height, such that the energy and sigmoid function are defined in terms of $H+z$ for threshold $H_{\text{cr}}=1$ and slope parameter $\alpha=10$ such as zeroed velocities by setting $\nu_1=\nu_2=1$. In addition, we penalize volume and thinness by setting $\nu_3=\expnumber{1}{-4},\nu_5=\expnumber{1}{-2}$ such as enforcing a stronger regularization by $\nu_4=\expnumber{1}{-4}$. 
\begin{figure}[!htb]
	\centering
	\begin{tikzpicture}
	\node[anchor=south west,inner sep=0] (1) {\vspace{-0.9cm}
		\includegraphics[width=4.5cm,height=3.0cm]{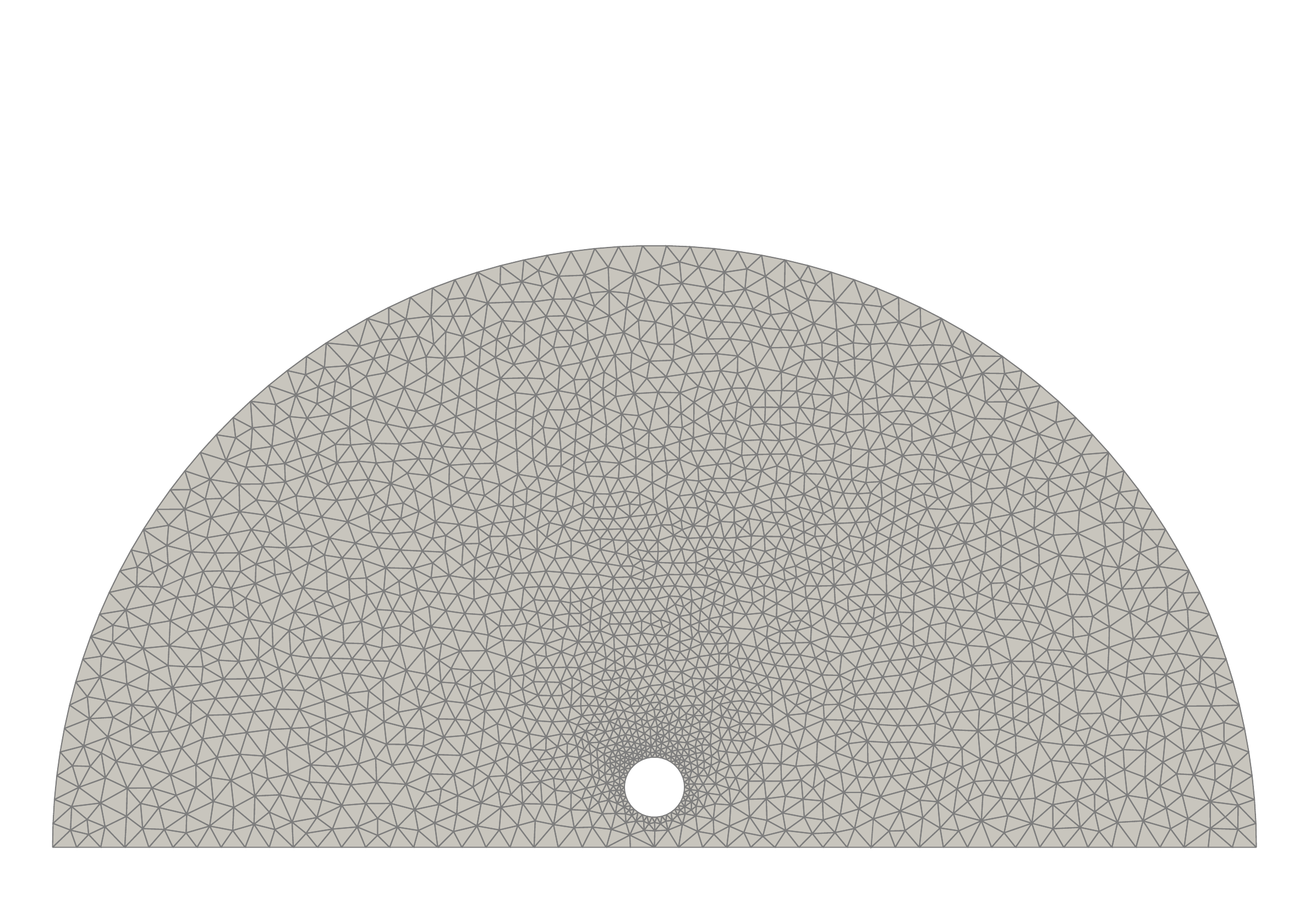}};
	\mbox{}\vspace{2cm}\node[below right = -3.1cm and 0.001cm of 1](2)
	{\mbox{}\vspace{10cm}\includegraphics[width=4.5cm,height=3.5cm,scale=0.5]{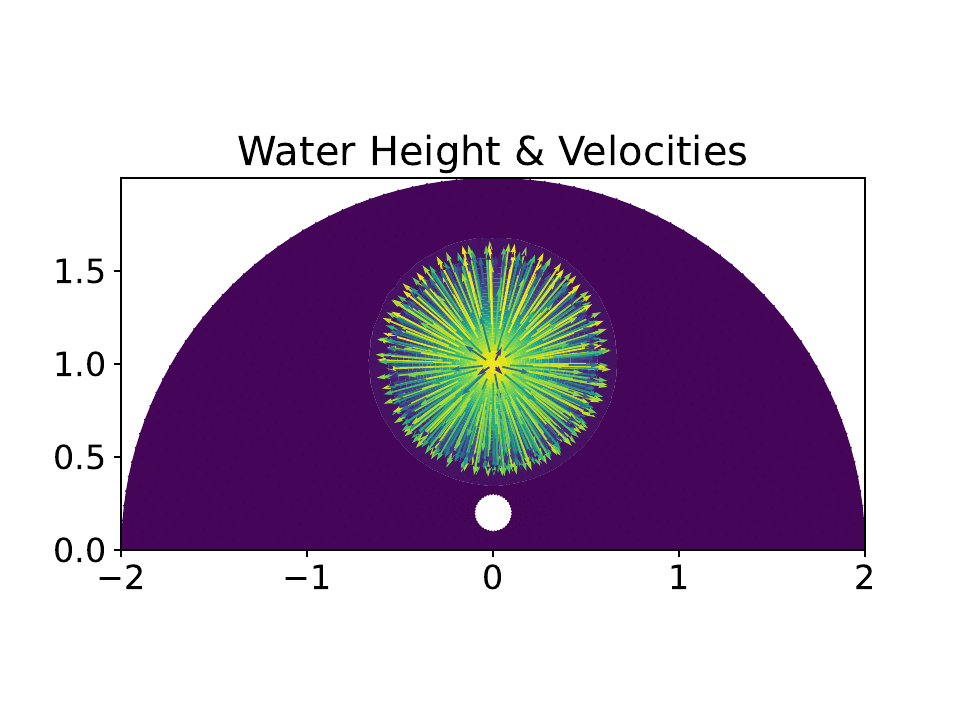}};
	\node[below= -0.85cm of 2](3)
	{\includegraphics[width=4.5cm,height=3.5cm,scale=0.5]{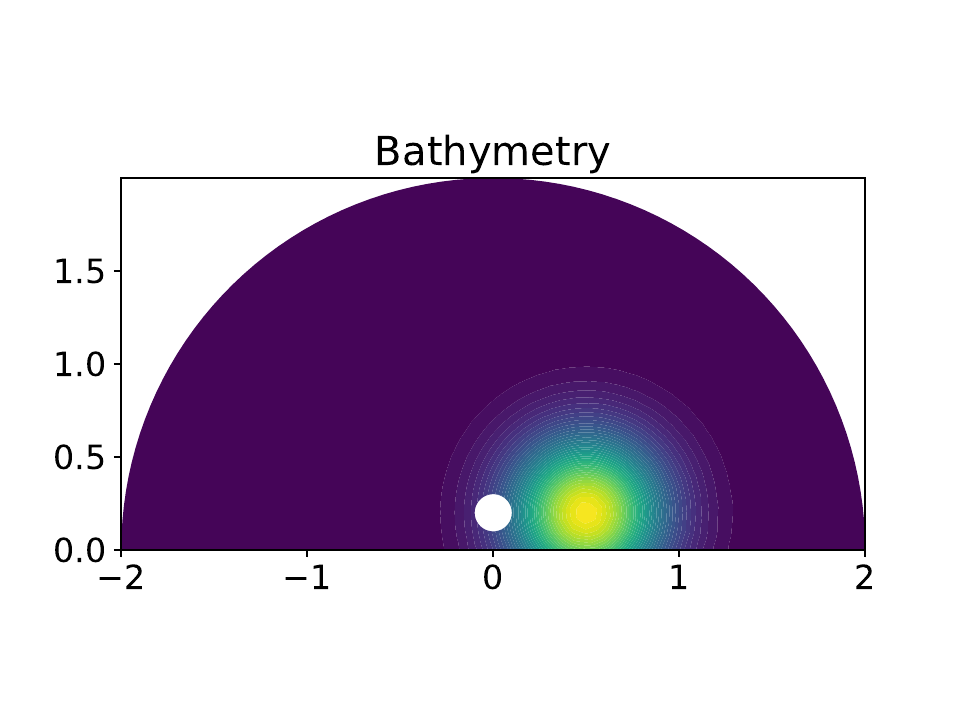}};
	\node[left= 0.015cm of 3](4)
	{\includegraphics[width=4.5cm,height=3.5cm,scale=0.5]{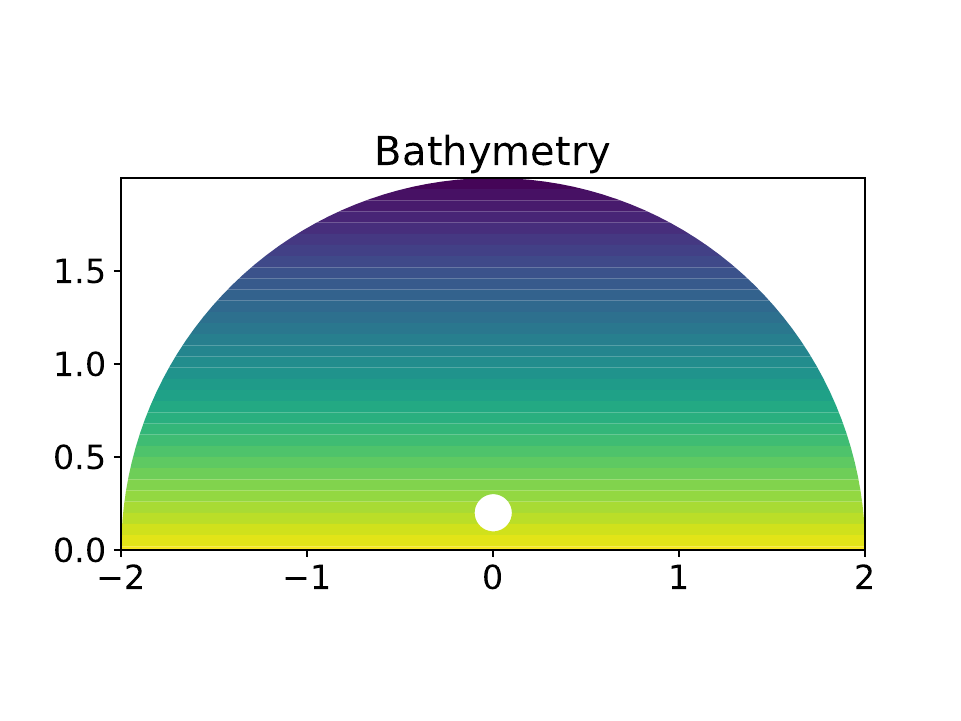}};
	\node[above= -0.85cm of 1] (7) 
	{\tiny Initial Mesh};
			\node[above left = -1cm and -0.09cm of 1](8)
	{\tiny (a)};
	\node[above left = -1cm and -0.14cm of 2](9)
	{\tiny (b)};
	\node[above left = -1cm and -0.09cm of 4](10)
	{\tiny (c)};
	\node[above left = -1cm and -0.14cm of 3](11)
	{\tiny (d)};
	\end{tikzpicture}
	\caption[Ex.1 Mesh, Water Height, Velocities \& Sediment]{(a) Initial Mesh and Obstacle, (b) Field State at $t=0.1$, (c) Linear Bathymetry, (d) Gaussian Peak Bathymetry}
	\label{fig:SWEInitHalfCircle}
\end{figure} In this example we have used an implicit backward Euler time-scheme and a DG-method of first order that was described before. For the spatial discretization, we have used the HLLE-flux function for the convective terms and $C_{IP}=20$ in the SIPG method. Solving the state equations requires the definition of the time-horizon, e.g. as $\tilde{T}=(0,T)=(0,2.5)$, which is chosen to include one full wave period, i.e. the travel of a wave to and from the shore. The discretization in time is based on a step size of $\diff t=\expnumber{5}{-3}$. Due to the nonlinear nature of the SWE we have used a Newton solver, where we set the absolute and relative tolerance as $\eps_{abs}=\eps_{rel}=\expnumber{1}{-6}$. The solution of the adjoint problem follows likewise, but stepping backwards in time. Since the problem is linear, a Newton solver is no longer needed. Having solved state and adjoint equations the mesh deformation is performed as described, where we specify $\mu_{min}=10$ and $\mu_{max}=100$ in (\ref{Eq:33Lame}). The step size is at $\rho=1$ and shrinks whenever criteria for line searches are not met. In Figure \ref{fig:OptiHalfCircleSWE} results of the shape optimization are displayed, firstly for a linear and secondly a Gaussian bottom after $44$ and $33$ steps of optimization.
\begin{figure}[htb!]
	\centering
	\begin{tikzpicture}
	\node[anchor=south west,inner sep=0] (1) {
		\includegraphics[width=4.5cm,height=3.0cm]{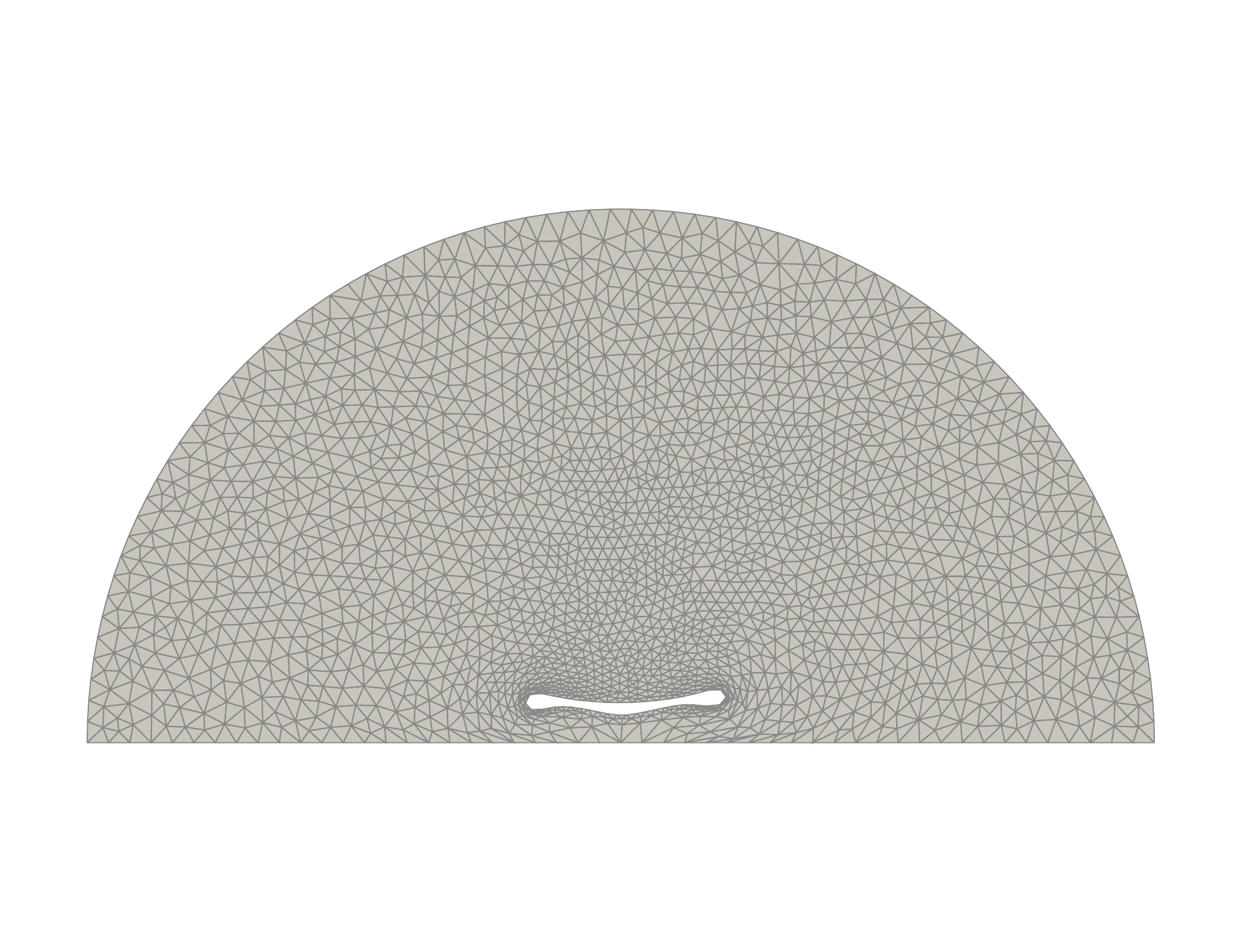}};
	\node[right= 0.5cm of 1](2)
	{\includegraphics[width=4.5cm,height=3.0cm]{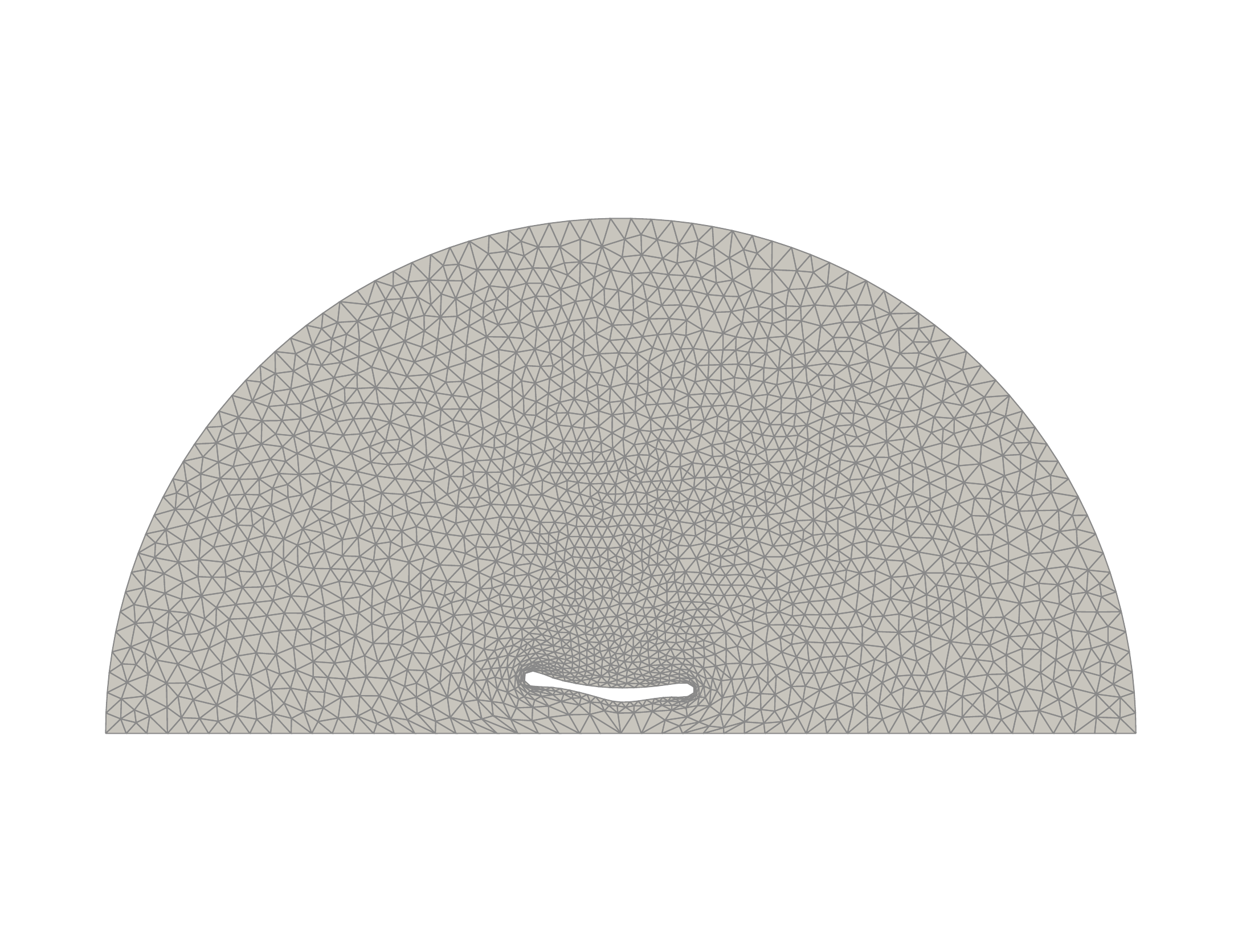}};
	\node[above= -0.7cm of 1] (7) 
	{\tiny Optimized Mesh};
	\node[above= -0.8cm of 2] (8)
	{\tiny Optimized Mesh};
	\node[below right = -0.65cm and -5cm of 1](3)
	{\scalebox{0.48}{% This file was created with tikzplotlib v0.9.13.
\begin{tikzpicture}

\begin{axis}[
legend cell align={left},
legend style={fill opacity=0.8, draw opacity=1, text opacity=1, draw=white!80!black},
tick align=outside,
tick pos=left,
title={Objective Value},
x grid style={white!69.0196078431373!black},
xlabel={Iteration},
xmajorgrids,
xmin=-2, xmax=46.45,
xtick style={color=black},
y grid style={white!69.0196078431373!black},
ylabel={Objective},
ymajorgrids,
ymin=0.0665101746852583, ymax=0.183949141309512,
ytick style={color=black}
]
\addplot [line width=1.64pt, blue]
table {%
0 0.178611006462955
1 0.174014277495061
2 0.168942850018294
3 0.162556239613287
4 0.156758351337907
5 0.150037021712602
6 0.143965486097876
7 0.137833915572868
8 0.132230772942651
9 0.127104977074808
10 0.122555673075063
11 0.117782523230384
12 0.113799018674096
13 0.110246574878673
14 0.106367249065556
15 0.103008871210741
16 0.100428922854886
17 0.0975933051526757
18 0.0947879763917409
19 0.0927173707583259
20 0.0902149247681994
21 0.0882768641950423
22 0.0864547584873849
23 0.0847414282811796
24 0.0831306096655933
25 0.0814742235341243
26 0.0804477512040719
27 0.0790236986057645
28 0.0780732935719587
29 0.0772448849783594
30 0.0764697191776202
31 0.075738895299049
32 0.0749110956073338
33 0.0746584838706872
34 0.0739139268951925
35 0.0736932105728908
36 0.0730016976498697
37 0.0728359792245216
38 0.072208117596665
39 0.0719511103923125
40 0.0718483095318152
41 0.0718472041325100
42 0.0718472032623300
43 0.0718471155586301
44 0.0718471145563111
};
\addlegendentry{$J(\Omega)$}
\end{axis}

\end{tikzpicture}}};
	\node[below right = -0.8cm and -4.8cm of 2](4)
	{\scalebox{0.48}{% This file was created with tikzplotlib v0.9.13.
\begin{tikzpicture}

\begin{axis}[
legend cell align={left},
legend style={fill opacity=0.8, draw opacity=1, text opacity=1, draw=white!80!black},
tick align=outside,
tick pos=left,
title={Objective Value},
x grid style={white!69.0196078431373!black},
xlabel={Iteration},
xmajorgrids,
xmin=-1.45, xmax=35.45,
xtick style={color=black},
y grid style={white!69.0196078431373!black},
ylabel={Objective},
ymajorgrids,
ymin=0.0688665028448153, ymax=0.279579521117153,
ytick style={color=black}
]
\addplot [line width=1.64pt, blue]
table {%
0 0.270001656650229
1 0.25351078847339
2 0.235324160400768
3 0.217194508962811
4 0.199576568478456
5 0.183592865625892
6 0.169121436480127
7 0.156458344556419
8 0.146125983203896
9 0.13636357163698
10 0.128115677177356
11 0.121454287922772
12 0.114844989078713
13 0.109317907043497
14 0.105115684192682
15 0.101141220725366
16 0.0981835788086664
17 0.094865935617389
18 0.0923326055002762
19 0.0898769609659449
20 0.0885741702744451
21 0.0868088444846374
22 0.0850522985253952
23 0.0841175377160838
24 0.0825832078805201
25 0.0815996468836118
26 0.0809112810505122
27 0.0796334449065232
28 0.078945069367712
29 0.0784443673117398
30 0.0783423271106300
31 0.0783422121472321
32 0.0783422025418520
33 0.0783422011121591
};
\addlegendentry{$J(\Omega)$}
\end{axis}

\end{tikzpicture}}};
				\node[above left = -1cm and -0.09cm of 1](8)
	{\tiny (a)};
	\node[above left = -1cm and -0.14cm of 2](9)
	{\tiny (b)};
	\node[above left = -1cm and -0.53cm of 3](10)
	{\tiny (c)};
	\node[above left = -1cm and -0.14cm of 4](11)
	{\tiny (d)};
	\end{tikzpicture}
	\caption[Ex.1 Optimization Results]{(a) Optimized Obstacle for Linear Seabed, (b) Optimized Obstacle for Gaussian Seabed, (c) Objective for Linear Seabed, (d) Objective for Gaussian Seabed}
	\label{fig:OptiHalfCircleSWE}
\end{figure}

 The deformations are symmetric in the first and in the opposing direction of the sediment hill in the second case. As we observe in the lower part of Figure \ref{fig:OptiHalfCircleSWE}, we have achieved notable decreases in the objective.

\subsection{Ex.2: Langue de Barbarie}\label{sec:exldb}
A more realistic computation is performed in the second example. Here we look at the LdB a coastal section in the north of Dakar, Senegal. In 1990 it consisted of a long offshore island, which eroded in three parts within two decades. Waves now travel unhindered to the mainlands, which causes severe damage and already destroyed large habitats.
Adjusting our model to this specific coastal section starts on mesh level. Shorelines are taken from the free GSHHG\footnote[1]{https://www.ngdc.noaa.gov/mgg/shorelines/} databank, following \cite{Avdis2016}. We build up an interface from a geographical information system (QGIS3) for processing the data to a computer aided design software (GMSH) for the mesh generation.  Similar to the preceding example, we interpret $\Ga$ as coastline of the mainland, $\Gd$ as the open sea boundary such as  $\Ge$ as the three offshore islands (cf. to Figure \ref{fig:WavePropagationLDB},\ref{fig:ShapeOptLDB}).
\begin{figure}[!htbp]
	\centering
		\begin{tikzpicture}
	\node[anchor=north west,inner sep=0] (6)  {\includegraphics[scale=0.3]{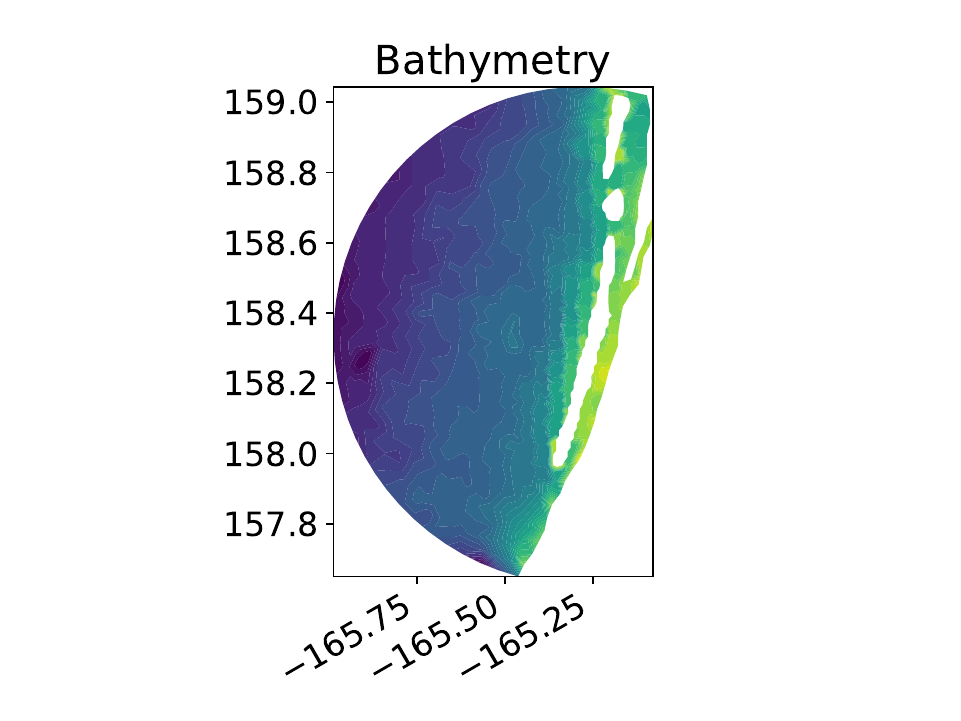}};
	\end{tikzpicture}
	\caption[Ex.2 LdB Sediment]{LdB Sediment Elevation}
	\label{fig:SedimentLDB}
\end{figure}

\begin{figure}[htb!]
	\centering
	\begin{subfigure}{0.2\textwidth}
		\centering
			\begin{tikzpicture}
		\node[anchor=south west,inner sep=0] (1) 
		{\includegraphics[scale=0.3]{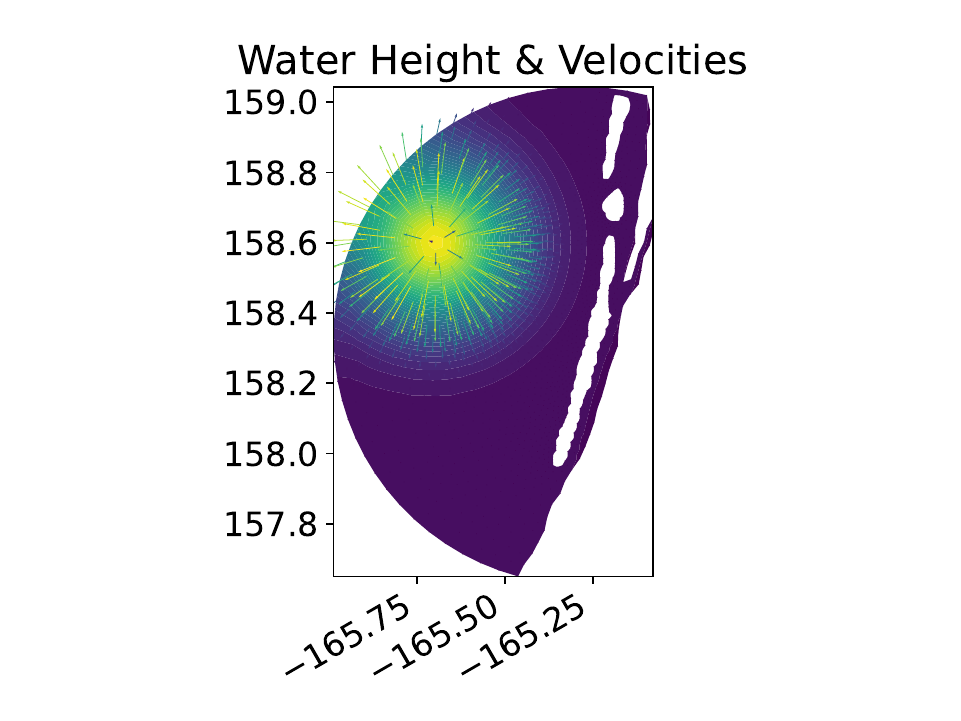}};
		\end{tikzpicture}
		\caption{$t=0$}
	\end{subfigure}
	\hfill
	\begin{subfigure}{0.2\textwidth}
		\centering
			\begin{tikzpicture}
		\node[anchor=south west,inner sep=0] (1) 
		{\includegraphics[scale=0.3]{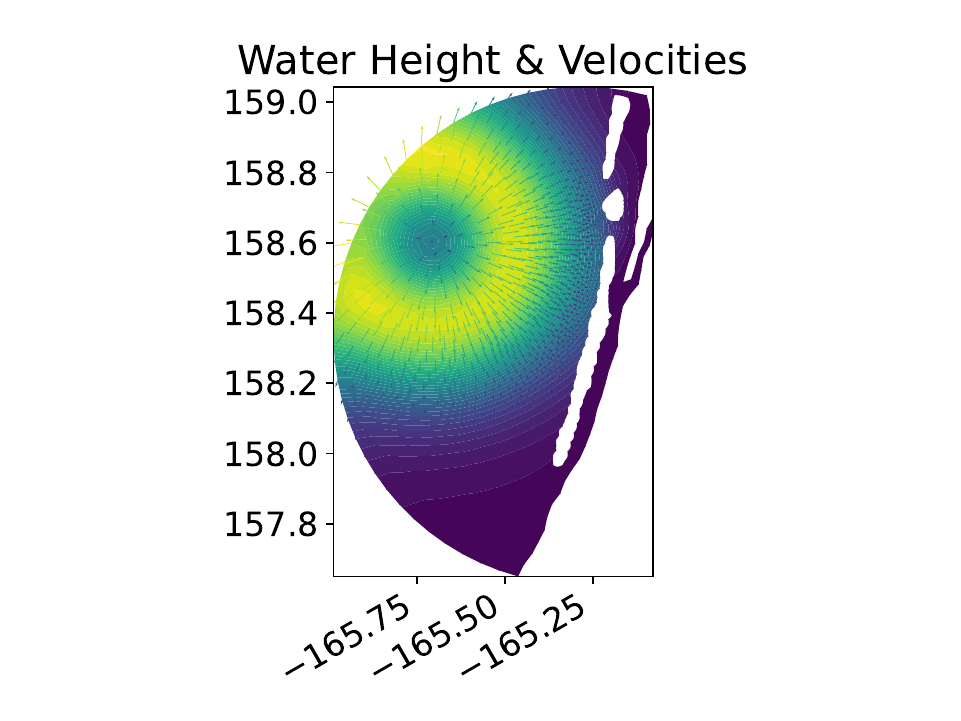}};
		
		\end{tikzpicture}
		\caption{$t=0.5$}
	\end{subfigure}
	\hfill
	\begin{subfigure}{0.2\textwidth}
		\centering
			\begin{tikzpicture}
		\node[anchor=south west,inner sep=0] (1) 
		{\includegraphics[scale=0.3]{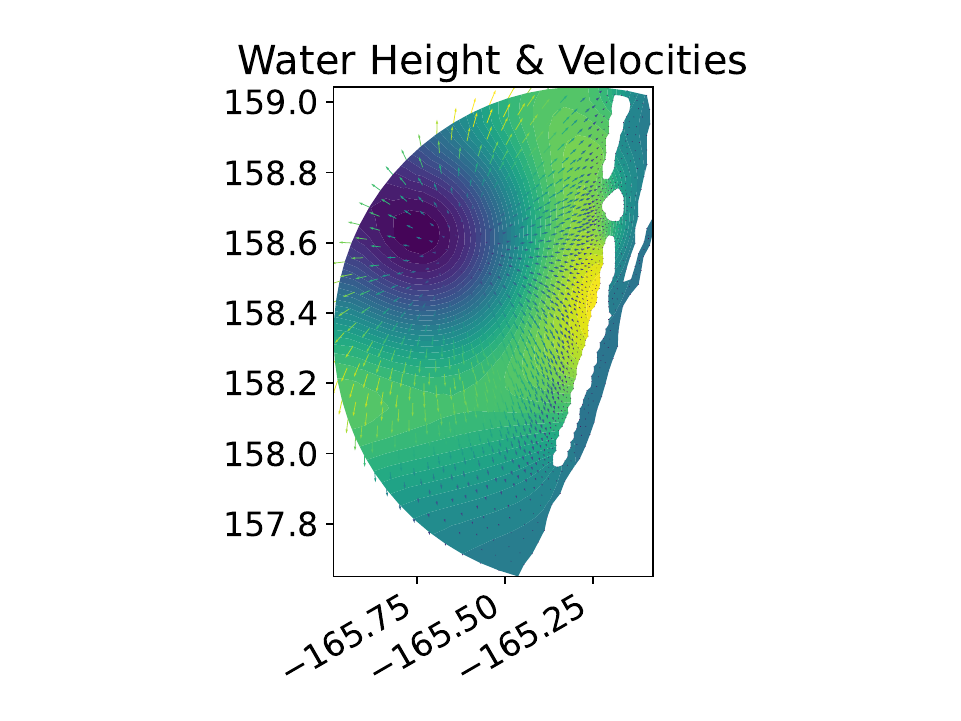}};
		\end{tikzpicture}
		\caption{$t=1$}
	\end{subfigure}
	\hfill
	\begin{subfigure}{0.2\textwidth}
		\centering
			\begin{tikzpicture}
		\node[anchor=south west,inner sep=0] (1) 
		{\includegraphics[scale=0.3]{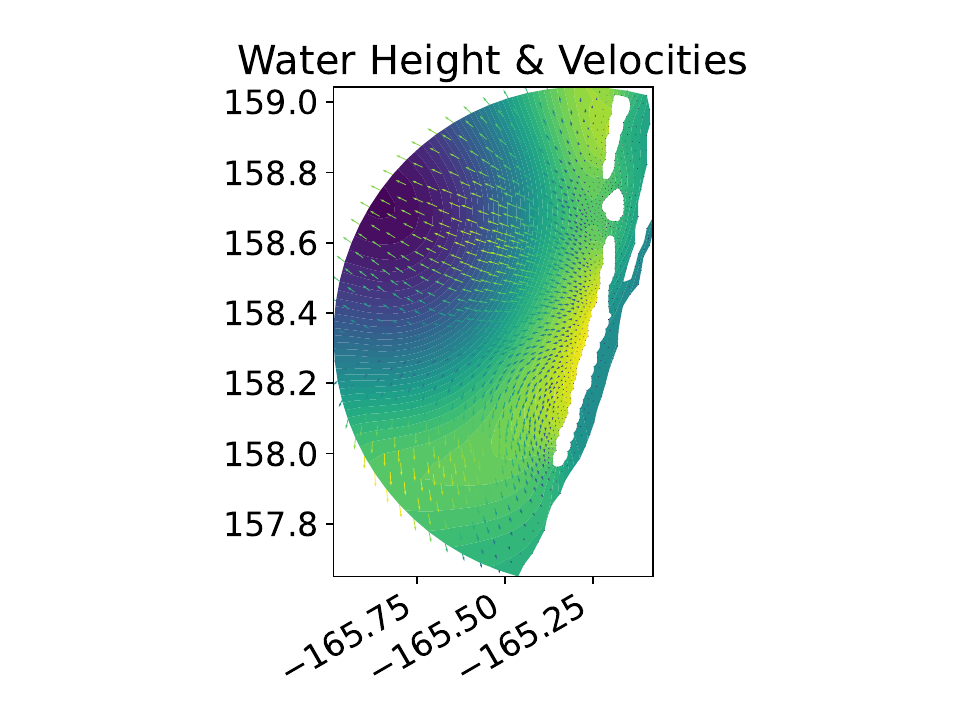}};
		\end{tikzpicture}
		\caption{$t=1.5$}
	\end{subfigure}
	\caption[Ex.2 Wave Propagation]{Visualization of a Wave Described by Height and Velocities, Travelling Towards the Shore for Initial Obstacle.}
	\label{fig:WavePropagationLDB}
\end{figure}

 As before, we start with Gaussian initial conditions for the height of the water. Sediment data is taken from the GEBCO\footnote[2]{https://www.gebco.net/} databank, where bathymetric elevation is mapped to a mesh point using a nearest neighbors algorithm. The sediment elevation can be taken from Figure \ref{fig:SedimentLDB}, while the wave propagation can be extracted from Figure \ref{fig:WavePropagationLDB}. The remaining model-settings are similar to Section \ref{sec:exhalfcircled}. Figure \ref{fig:ShapeOptLDB} pictures initial, such as deformed mesh and obstacle after $30$ steps of optimization.
\begin{figure}[htb!]
	\centering
	\begin{subfigure}{0.4\textwidth}
		\centering
		\begin{tikzpicture}
		\node[anchor=south west,inner sep=0] (1) {
		\includegraphics[width=5cm,height=3.0cm]{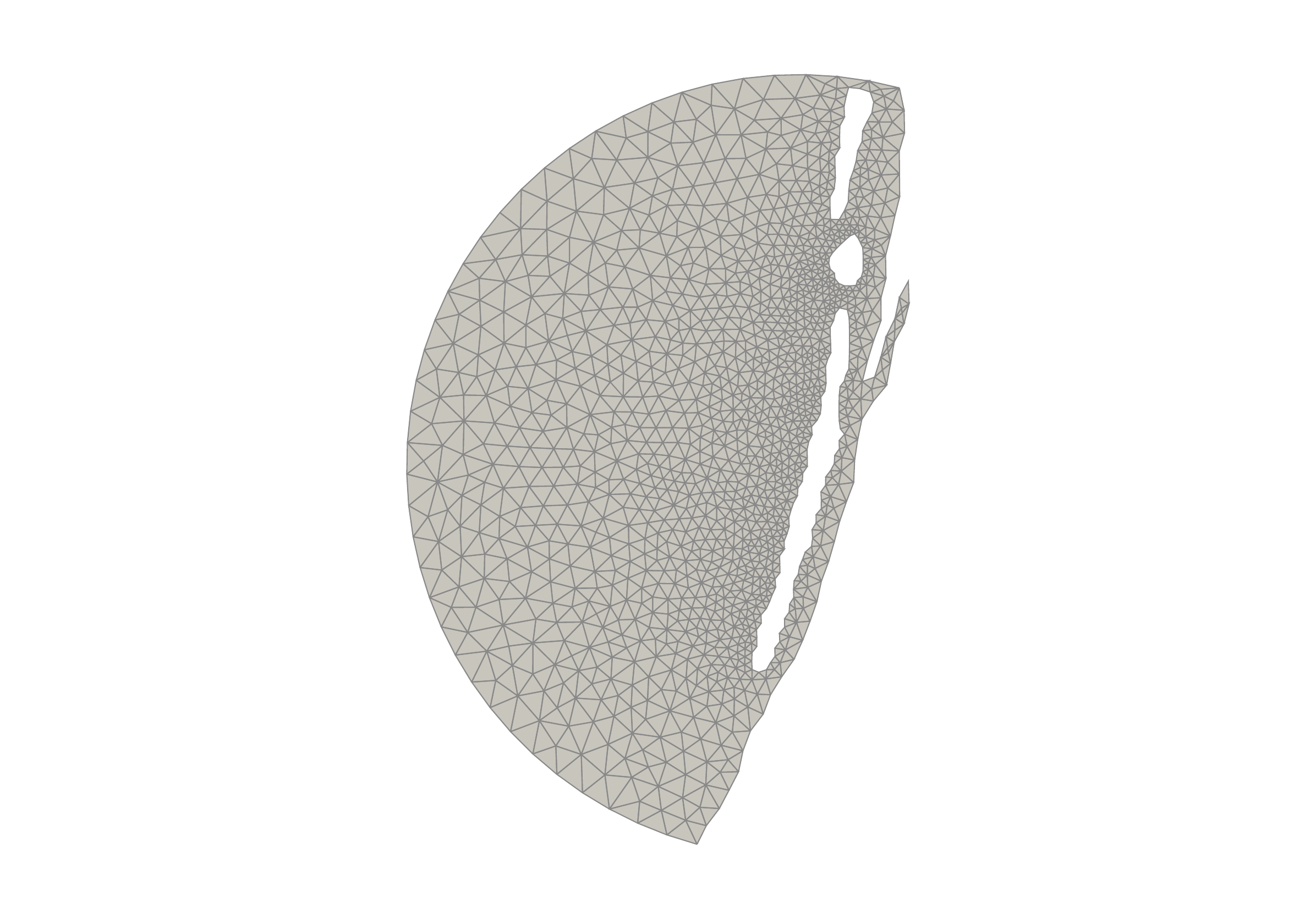}};
		\end{tikzpicture}
		\caption{Initial Mesh}
	\end{subfigure}
	\hfill
	\begin{subfigure}{0.4\textwidth}
		\centering
		\begin{tikzpicture}
		\node[anchor=south west,inner sep=0] (1)
	{\includegraphics[width=4.5cm,height=3.0cm]{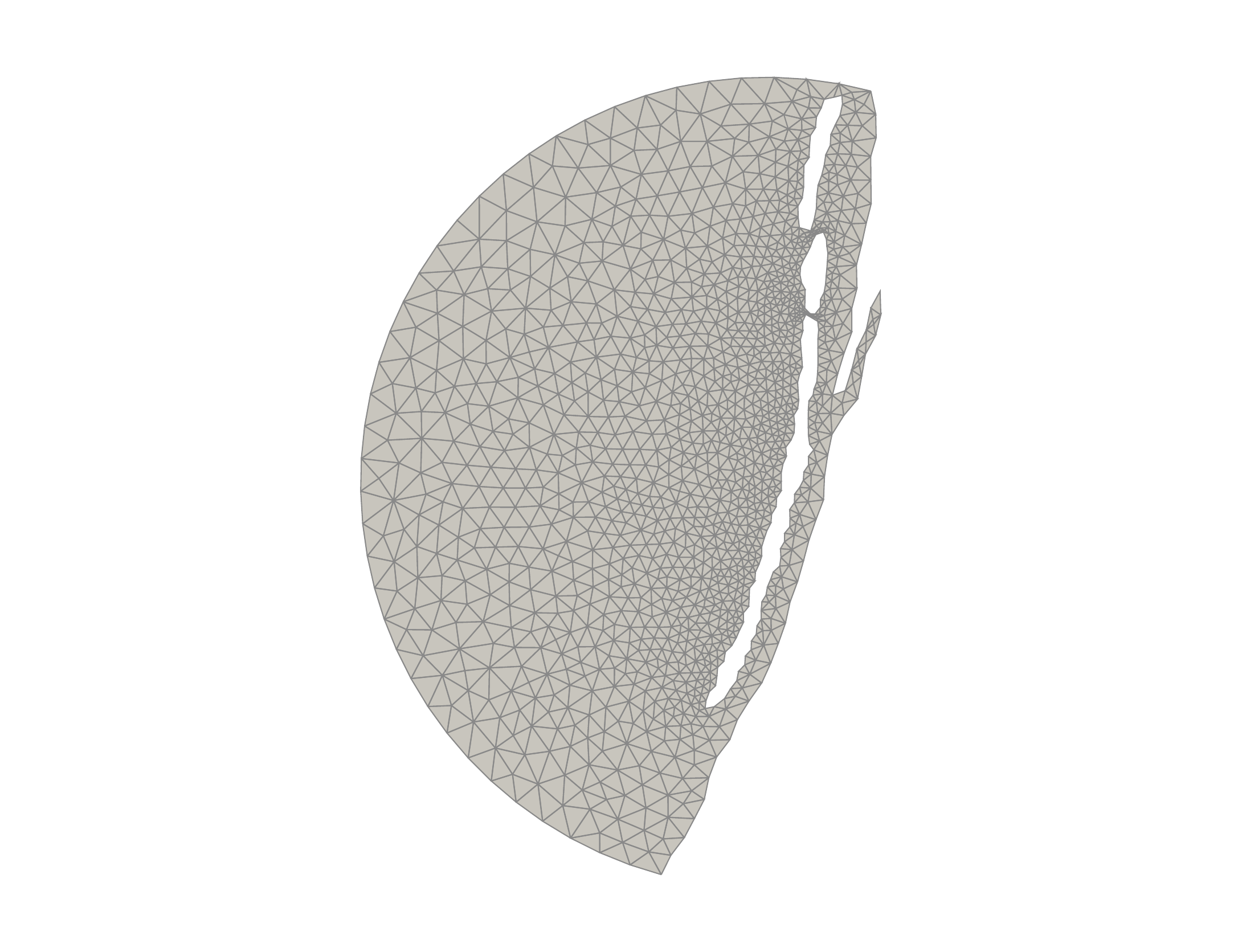}};
		\end{tikzpicture}
		\caption{Optimized Mesh}
	\end{subfigure}
	\caption[Ex.2 Initial, Optimized Mesh \& Objective]{Initial and Optimized Mesh and Obstacle}
	\label{fig:ShapeOptLDB}
\end{figure}

One can observe a similar behaviour as in Subsection \ref{sec:exhalfcircled}, where the obstacle is stretched to protect an as large as possible area. In this setting, the optimizer suggests to reconnect the three islands. However, rebuilding the complete island would either call for a remeshing procedure or an alternative algorithm for shape optimization, e.g. level sets as in \cite{Keuthen2015} are capable of similar. We highlight that obtained results must be treated with caution, since rebuilding would require an excessive amount of landmass. As an alternative, simulations with artificial offshore islands subject to volume constraints can be performed.  In Figure \ref{fig:TargetFunctionLDB} the convergence of the objective can be observed. 
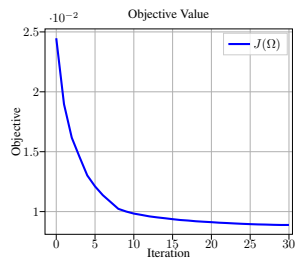
\begin{figure}[!htb]
	\centering
	\begin{tikzpicture}
	\node[anchor=north west,inner sep=0] (6)  {\scalebox{0.48}{% This file was created with tikzplotlib v0.9.13.
\begin{tikzpicture}

\begin{axis}[
legend cell align={left},
legend style={fill opacity=0.8, draw opacity=1, text opacity=1, draw=white!80!black},
tick align=outside,
tick pos=left,
title={Objective Value},
x grid style={white!69.0196078431373!black},
xlabel={Iteration},
xmajorgrids,
xmin=-1.45, xmax=30.45,
xtick style={color=black},
y grid style={white!69.0196078431373!black},
ylabel={Objective},
ymajorgrids,
ymin=0.00810317657677666, ymax=0.0252512247910592,
ytick style={color=black}
]
\addplot [line width=1.64pt, blue]
table {%
0 0.0244717680540464
1 0.0189625341585475
2 0.0161690074293594
3 0.0145629711262295
4 0.0130181427426235
5 0.0121032088595392
6 0.0113724865830049
7 0.0108034769738538
8 0.0102305565968628
9 0.0100081861431565
10 0.00983410650064072
11 0.00972085931019617
12 0.00960076074877076
13 0.00951741957137013
14 0.00944562798221293
15 0.00936672694220101
16 0.00929727616988194
17 0.00925269280982534
18 0.00919296459448541
19 0.00915325586172253
20 0.00911750545525922
21 0.00907240608338492
22 0.00904413502323294
23 0.00900930922005377
24 0.00897879020513196
25 0.00895303817809148
26 0.00892977339786163
27 0.00891641858682834
28 0.00889598664267437
29 0.0088826333137895
30 0.0088841106161288
};
\addlegendentry{$J(\Omega)$}
\end{axis}

\end{tikzpicture}}};
	\end{tikzpicture}
	\caption[Ex.2 Objectivel]{Objective for LdB Mesh}
		\label{fig:TargetFunctionLDB}
\end{figure}
\FloatBarrier

\subsection{Ex.3: World Mesh}\label{sec:exworldmesh}
In the third and last example, we extend presented techniques to immersed-manifolds, in order to perform global shore protection. For this, we define $\Om$ to be a smooth $m$-dimensional manifold immersed in $R^n$, where $m=2$ denotes the topological dimension and $n=3$ the geometric dimension. We assume a similar setting as before, where $\Ga$ represents the continent of Africa and $\Gb$ the remaining coastal points. In addition, we have placed three initial circled obstacles with boundary $\Ge$ in before the shore of West-Africa that serve as obstacle.
\begin{figure}[!htb]

	\centering
	\begin{tikzpicture}
	\node[anchor=south west,inner sep=0] (1) {
	\includegraphics[width=4.7cm,height=3.0cm]{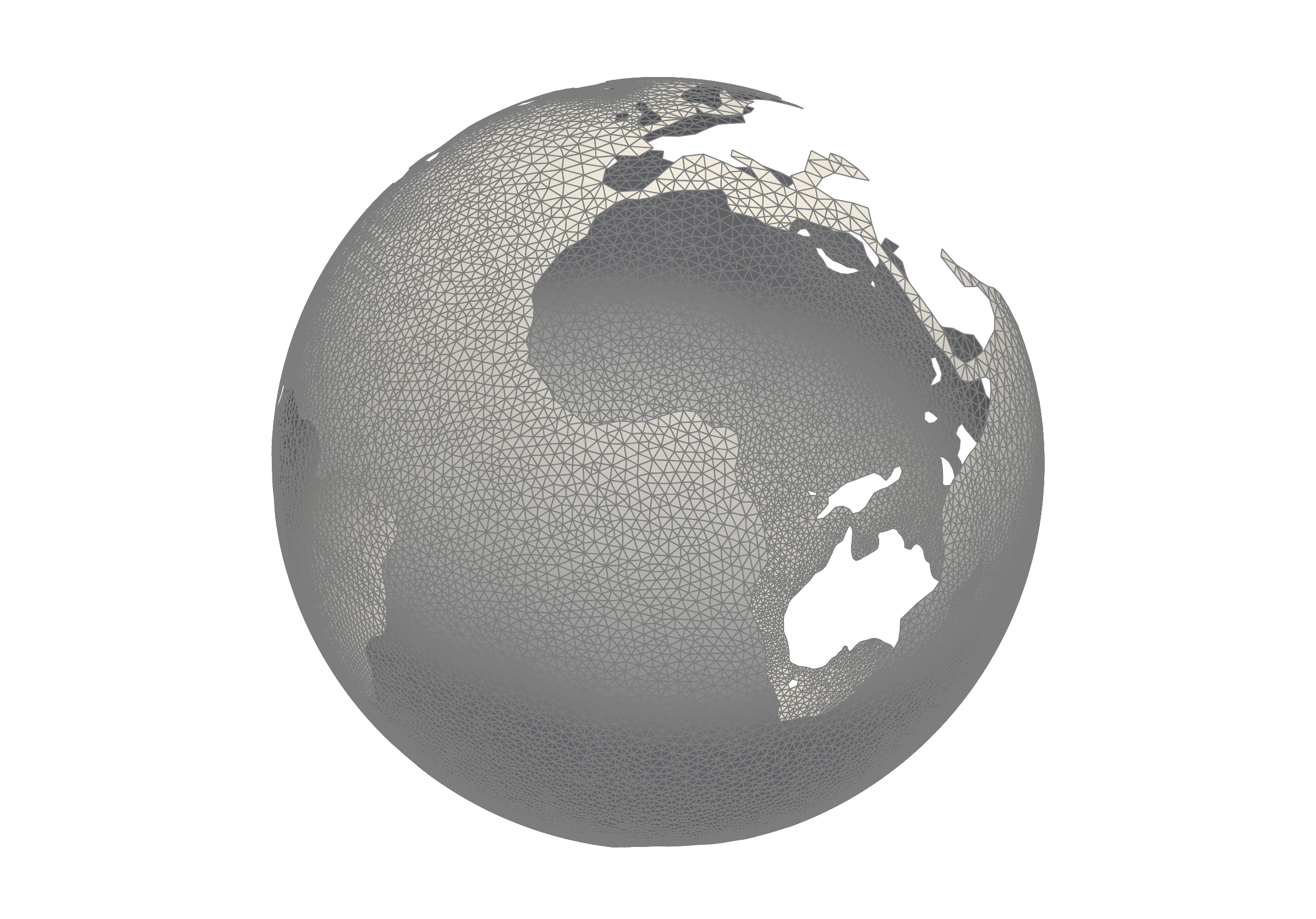}};
	\end{tikzpicture}
    % Code for the plot
    
	\caption[Ex.3 High Resolution World Mesh]{High Resolution World Mesh}
	\label{fig:WorldMesh}
\end{figure}
From the implementational side we have again used the GSHHG databank to obtain coastal data and mapped the points to a PolarSphere in GMSH (cf. to Figure \ref{fig:WorldMesh}). For the discretization we follow \cite{Rognes2013}, from which an extension of the FEniCS software to the scenario above stems from. We aim for a solution in the geometric space i.e. $U_h=(H_h,u_hH_h,v_hH_h,w_hH_h)$ relying on $DG$-elements, i.e. $DG_1\times DG_3$, where we weakly enforce the vector-valued velocity to be in the spherical tangent space. Alternatively, we could solve in the mixed discrete Function Space $DG_1\times RT_1$, where $RT_1$ denotes Raviar-Thomas finite elements, which lie in the tangent space simple from its construction. We define initial conditions in the geometric space as $U_0=(2+\exp(-c(x-x_0)^2 - c(y-y_0)^2 - c(z-z_0)^2),0,0,0)$ for suitable coordinates $(x_0,y_0,z_0)$ and constant $c$. In contrast to the examples before, open sea boundaries are not required any more, such that all boundaries are subject to rigid boundary conditions. The seabed is for simplicity assumed to be flat. The remaining model-settings are similar to Subsection \ref{sec:exhalfcircled}. The wave propagation is visualized in Figure \ref{fig:WavePropagationSphere}.
\begin{figure}[htb!]
	\centering
	\begin{subfigure}{0.2\textwidth}
		\centering
			\begin{tikzpicture}
		\node[anchor=south west,inner sep=0] (1) 
		{\includegraphics[scale=0.11]{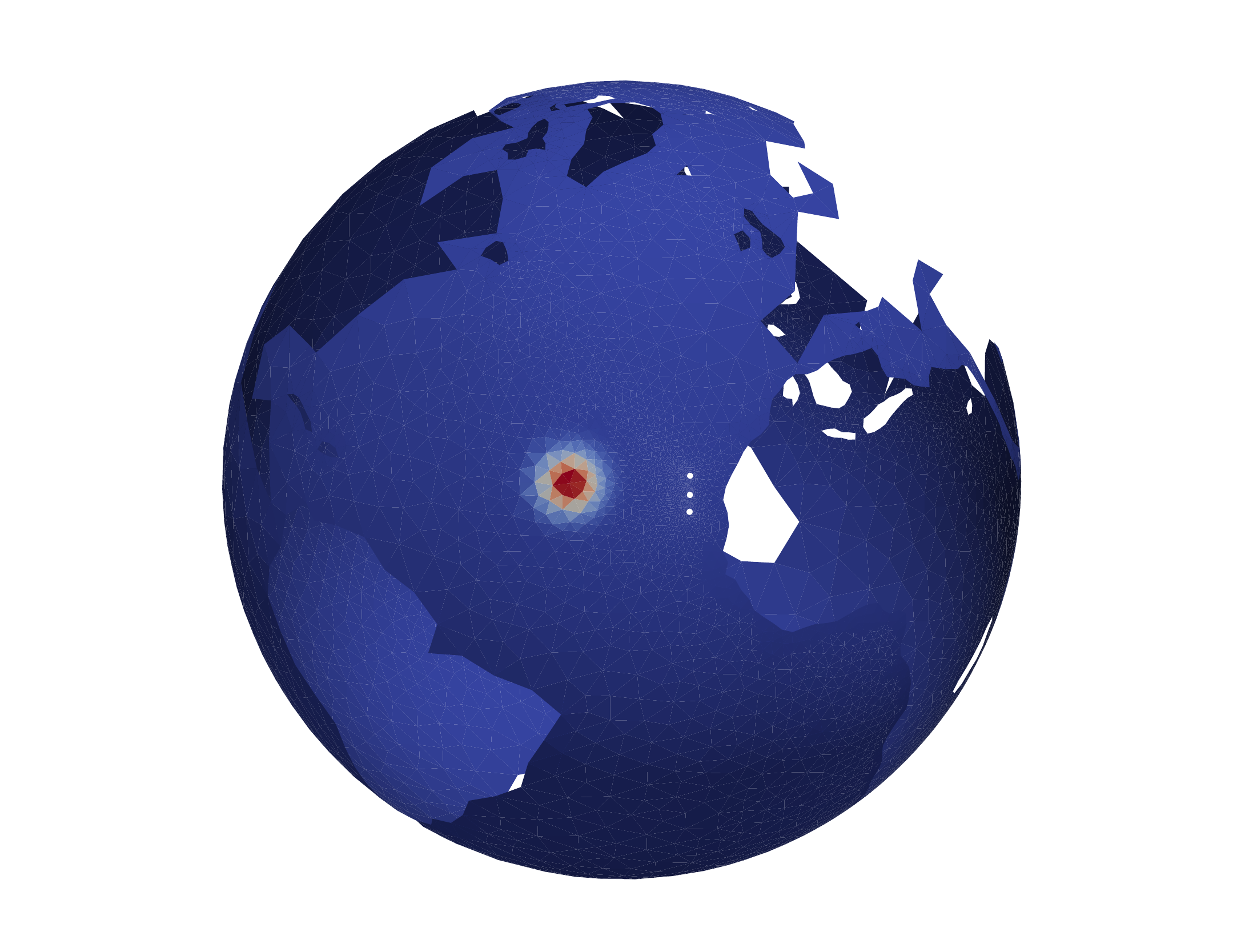}};
		\end{tikzpicture}
		\caption{$t=0$}
	\end{subfigure}
	\hfill
	\begin{subfigure}{0.2\textwidth}
		\centering
			\begin{tikzpicture}
		\node[anchor=south west,inner sep=0] (1) 
		{\includegraphics[scale=0.11]{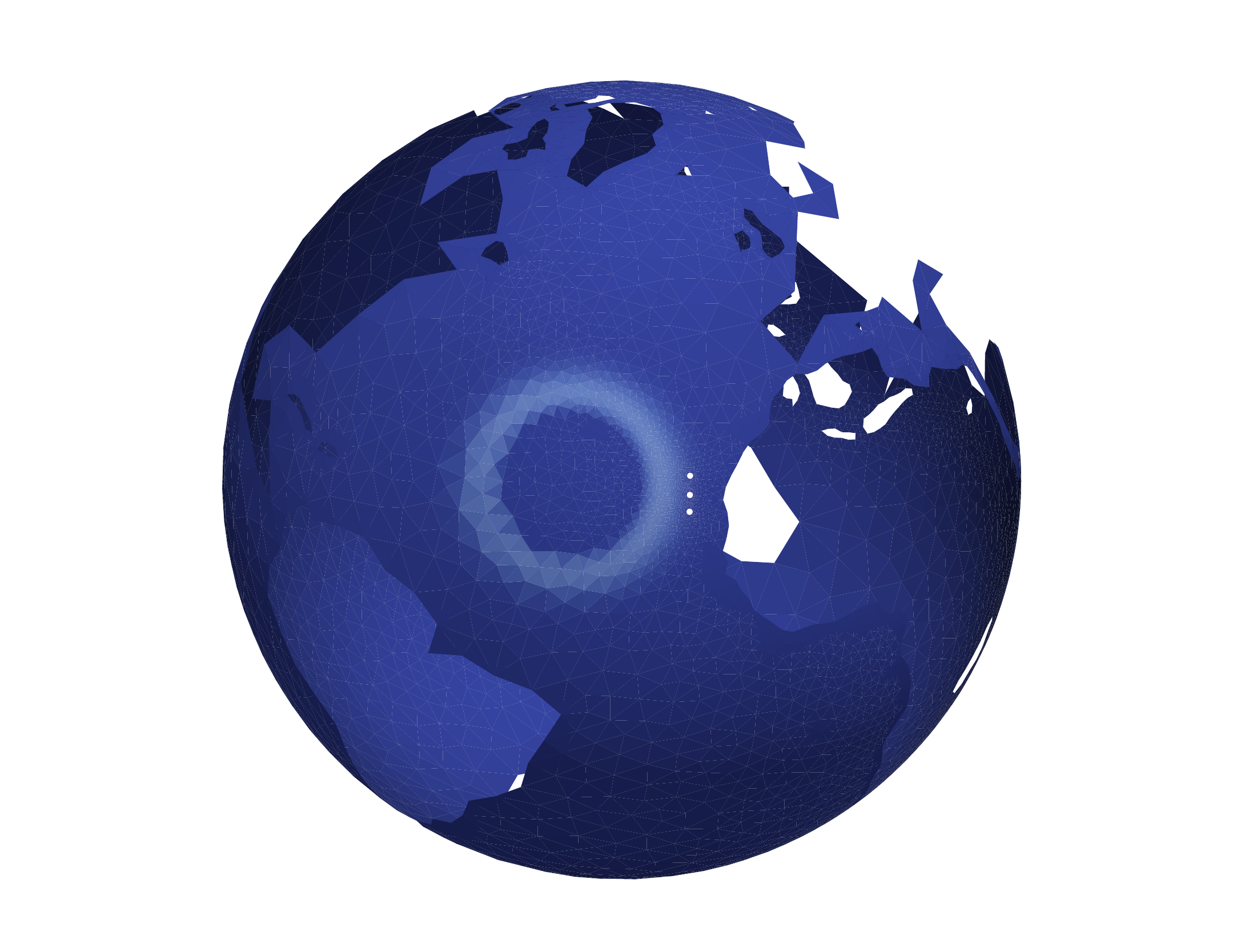}};
		\end{tikzpicture}
		\caption{$t=0.5$}
	\end{subfigure}
\hfill
	\begin{subfigure}{0.2\textwidth}
	\centering
		\begin{tikzpicture}
	\node[anchor=south west,inner sep=0] (1) 
	{\includegraphics[scale=0.11]{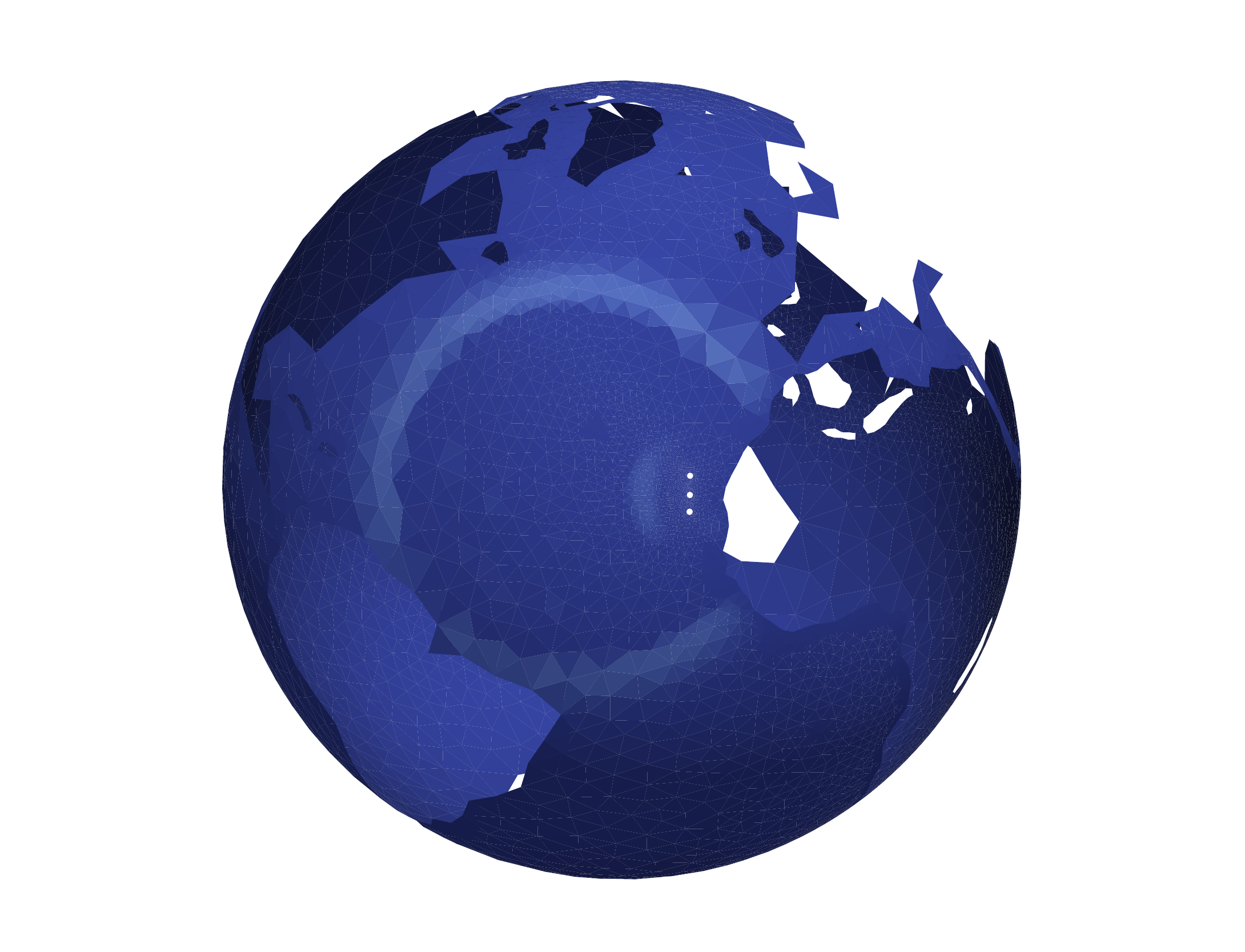}};
	\end{tikzpicture}
	\caption{$t=1$}
\end{subfigure}
\hfill
	\begin{subfigure}{0.2\textwidth}
	\centering
		\begin{tikzpicture}
	\node[anchor=south west,inner sep=0] (1) 
	{\includegraphics[scale=0.11]{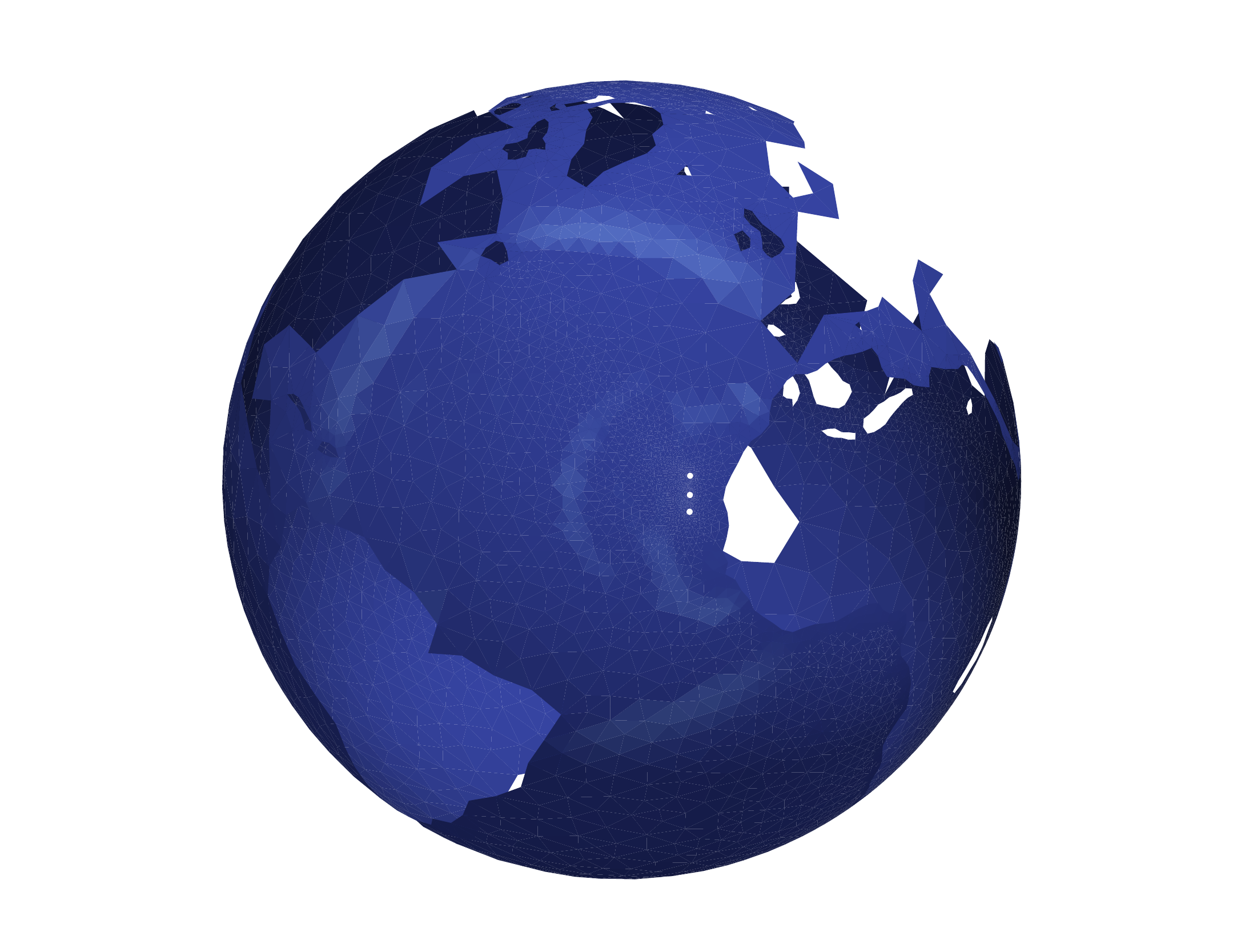}};
	\end{tikzpicture}
	\caption{$t=1.5$}
\end{subfigure}
	\caption[Ex.3 Wave Propagation]{Visualization of a Wave Described by Height, Travelling Towards the Continents for Initial Obstacle.}
\label{fig:WavePropagationSphere}
\end{figure}

 For performing shape optimization we remark for completeness that updating the finite element mesh in each iteration is done via the solution $\vec{W}:\Om\rightarrow\R^3$ of the linear elasticity equation, where we again enforce a tangential solution and hence solve
		\begin{equation}
\begin{aligned}
\int_\Om\left[\sigma(\vec{W}):\eps(\Vv)-l\vec{k}\cdot \Vv+ \vec{W}\cdot\vec{k}\gamma\right]\diff x&=DJ(\Om)[\Vv]& \hspace{1cm} \quad\\
\frac{\partial \vec{W}}{\partial \nv}&=0 \quad &\text{on }&& \Ge&&\\
\vec{W}&=0 \quad &\text{on } &&\Ga,\Gb&&
\end{aligned}
\label{Eq:29LinearElasticityMod}
\end{equation}
for unit outward normal $\vec{k}$ to the surface of the manifold, Lagrange multiplier $l\in DG_1$ for all $(\Vv,\gamma)$ such as $\sigma$ and $\eps$ as in (\ref{Eq:29LinearElasticity}).  We would like to highlight that (\ref{Eq:29LinearElasticityMod}) represents an elliptic PDE, that can without further ado being solved directly. However, movements on a manifold would typically call for retractions, e.g. via usage of an exponential mapping \cite[Chapter 4]{Absil2008}. The resulting deformed obstacles can be seen in Figure \ref{fig:DeformedObstacles}.

\begin{figure}[htb!]
	\centering
	\begin{subfigure}{0.4\textwidth}
		\centering
	
\begin{tikzpicture}
\node[anchor=south west,inner sep=0] (1) {
	\includegraphics[width=4.7cm,height=3.0cm]{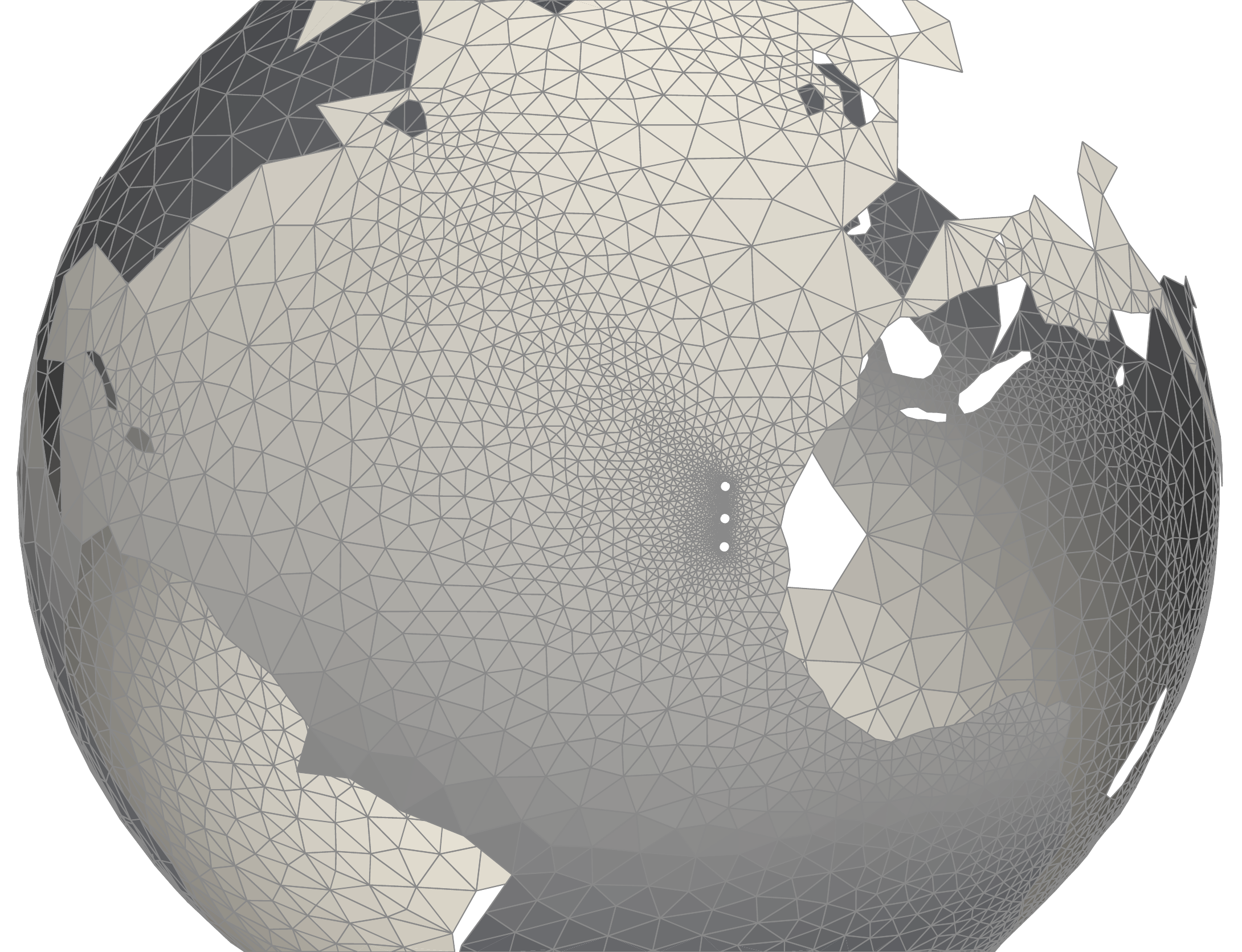}};
\end{tikzpicture}
		\caption{Initial Mesh}
	\end{subfigure}
	\hfill
	\begin{subfigure}{0.4\textwidth}
		\centering
	\begin{tikzpicture}
\node[anchor=south west,inner sep=0] (1)
{\includegraphics[width=4.7cm,height=3.0cm]{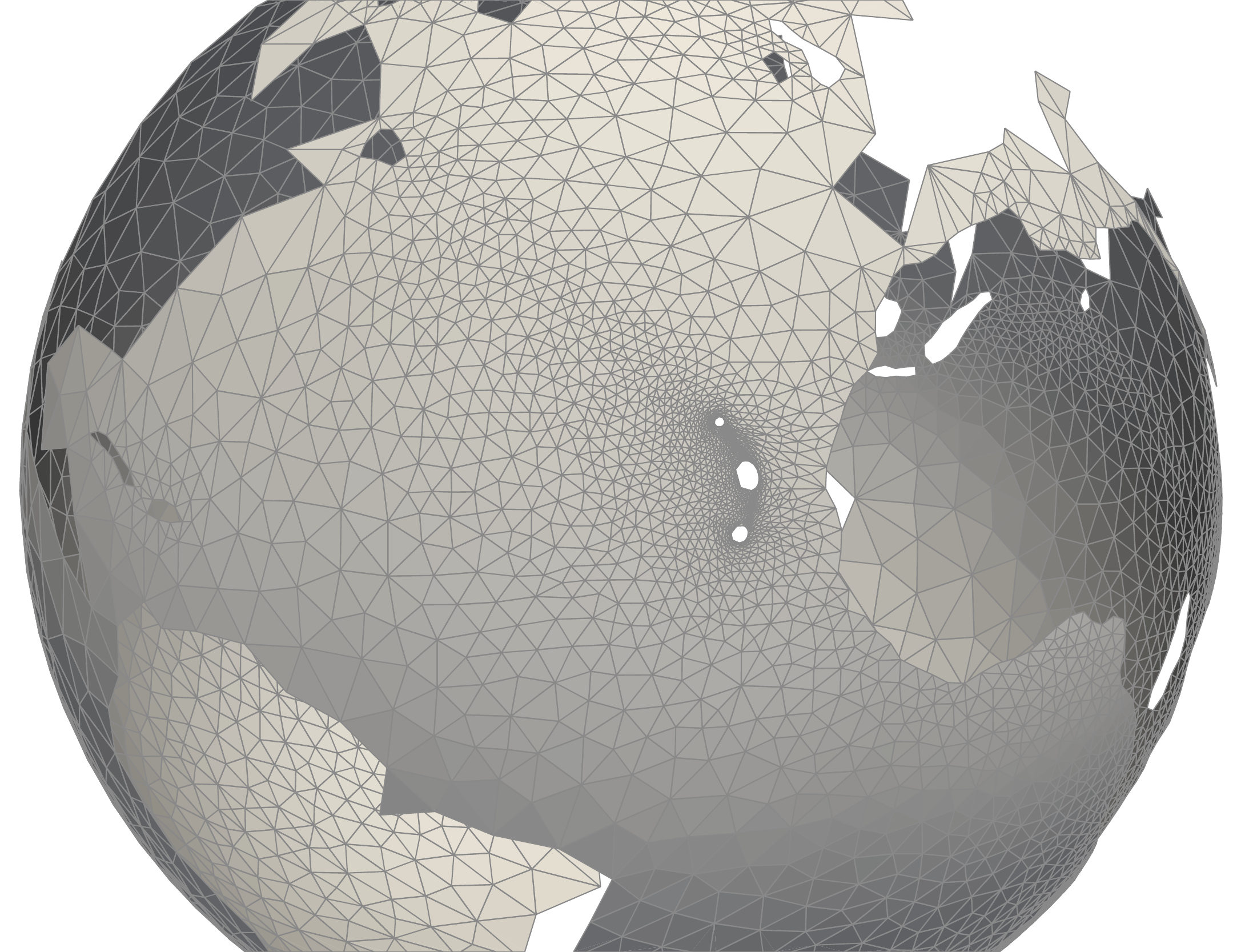}};
\end{tikzpicture}
		\caption{Optimized Mesh}
	\end{subfigure}
	\caption[Ex.3 Initial \& Optimized Mesh \& Obstacle]{Initial and Optimized Mesh and Obstacle}
	\label{fig:DeformedObstacles}
\end{figure}
In Figure \ref{fig:WorldMeshObj} we once more observe convergence of the objective function.
\begin{figure}[!htbp]
	\centering
\scalebox{0.48}{\input{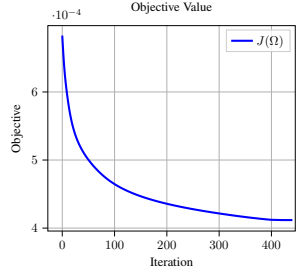}}
	\caption[Ex.3 Objective]{Objective for World Mesh}
	\label{fig:WorldMeshObj}
\end{figure}

Lastly, we would like to point out that the obtained results are only offering a simplistic analysis to protect the shore of Africa $\Ga$, that can be used as a first feasibility study. For a more comprehensive discussion one would need to adapt the model to non-shallow flows, simulate a non-flat seabed and take care on the wetting-drying phenomenon (cf. e.g. to \cite{Karna2011}). On coastal boundaries $\Ga,\Gb$ and $\Ge$ more accurate solutions would be obtained by replacing rigid boundary conditions by partially absorbing boundary conditions. Finally, an extension of $\Ga$ to all shores where various waves are produced with multiple obstacles placed before several shorelines, that are all restricted in volume, could lead to more sophisticated conclusions.

\section{Conclusion}
We have derived the time-dependent continuous adjoint and shape derivative of the SWE in volume form. The results were tested on a simplistic sample mesh for a linear and Gaussian seabed, as well as on more realistic meshes, picturing the Langue de Barbarie coastal section and a world simulation. The optimized shape strongly orients itself to the wave direction and to the mesh region that is to be protected. The results can be easily adjusted for arbitrary meshes, objective functions and different wave properties driven by initial and boundary conditions. However, the obtained obstacles are often too large for practical implementations, hence we admit that this work can only serve as a first feasibility study.

% keywords can be removed
\keywords{Shape Optimization\and  Obstacle Problem\and  Numerical Methods\and  Adjoint Methods\and Shallow Water Equations\and  Coastal Erosion}

\section*{Acknowledgement}
This work has been supported by the Deutsche
Forschungsgemeinschaft within the Priority program SPP 1962 "Non-smooth and Complementarity-based Distributed Parameter Systems: Simulation and Hierarchical Optimization". The authors would like to thank Diaraf Seck (Université Cheikh Anta Diop, Dakar, Senegal) and Mame Gor Ngom (Université Cheikh Anta Diop, Dakar, Senegal) for helpful and interesting discussions within the project Shape Optimization Mitigating Coastal Erosion (SOMICE). 

\bibliographystyle{unsrt}
\bibliography{bibliography}  %%% Uncomment this line and comment out the ``thebibliography'' section below to use the external .bib file (using bibtex) .
\appendix

% \end{thebibliography}
\section{Derivation of Adjoint Equations}\label{app:derivadj}
\begin{proof}
	We need to rewrite the weak form (\ref{Eq:17aweakSWE}) as 
	\begin{align*}
	a(H,\Qv,p,\rv)=&\int_0^T\int_\Om-\frac{\partial p}{\partial t}H\diff x\diff t+\int_\Om \left[H(x,T)p(x,T)-H_0p(x,0)\right]\diff x\\
	%--------------------------------- flux sediment
	+&\int_0^T\int_\Om-\Qv\cdot\nab p\diff x\diff t+\int_0^T\int_\Gamma p\Qv\cdot \nv\diff s\diff t\\
	+&\int_0^T\int_\Om-(H+z)\nab\cdot(\mu_v\nab p)\diff x\diff t\\
	%--------------------------------- viscous sediment
	+&\int_0^T\int_\Gamma\left[\mu_v(H+z)\nab p\cdot \nv-p\mu_v\nab (H+z)\cdot \nv\right]\diff s\diff t\\
	%--------------------------------- time momentum
	+&\int_0^T\int_\Om-\frac{\partial \rv}{\partial t}\cdot\Qv\diff x\diff t+\int_\Om \left[\Qv(x,T)\cdot\rv(x,T)-\Qv_0\cdot\rv(x,0)\right]\diff x\\
	%--------------------------------- 1. flux momentum source terms
	+&\int_0^T\int_\Om-\frac{\Qv}{H}\cdot\nab\rv\cdot\Qv\diff x\diff t+\int_0^T\int_\Gamma  \frac{\Qv}{H}\cdot\rv\Qv\cdot \nv\diff s\diff t\\
	%--------------------------------- 2. flux momentum
	+&\int_0^T\int_\Om-\frac{1}{2}gH^2\nab\cdot\rv\diff x\diff t+\int_0^T\int_\Gamma \frac{1}{2}gH^2\rv\cdot \nv\diff s\diff t\\
	+&\int_0^T\int_\Om-\Qv\cdot
	\nab\cdot(G(\mu_f)\nab \rv)\diff x\diff t+\int_0^T\int_\Om gH\nab z \cdot\rv\diff x\diff t\\
	%--------------------------------- viscous momentum
	+&\int_0^T\int_\Gamma\left[G(\mu_f)\Qv\cdot\nab \rv\cdot \nv-\rv\cdot G(\mu_f)\nab \Qv\cdot \nv\right]\diff s\diff t\text{.}
	%--------------------------------- 
	\end{align*}
	Inserting Boundary Conditions leads to
	\begin{align*}
	a(H,\Qv,p,\rv)=&\int_0^T\int_\Om-\frac{\partial p}{\partial t}H\diff x\diff t+\int_\Om \left[H(x,T)p(x,T)-H_0p(x,0)\right]\diff x\\
	%--------------------------------- flux continuity
	-&\int_0^T\int_\Om\Qv\cdot\nab p\diff x\diff t+\int_{0}^T\int_{\Gd}p\Qv\cdot\nv\diff s\diff t\\
	%---------------------------------
	-&\int_0^T\int_\Om\frac{1}{2}gH^2\nab\cdot\rv\diff x\diff t-\int_0^T\int_\Om(H+z)\nab\cdot(\mu_v\nab p)\diff x\diff t\\
	%--------------------------------- viscous continuity
	+&\int_0^T\int_{\Gd}-p\mu_v\nab (H_1+z)\cdot \nv\diff s\diff t\\	
	%--------------------------------- viscous continuity
	+&\int_0^T\int_{\Ga,\Ge}\mu_v(H+z)\nab p\cdot \nv\diff s\diff t+\int_0^T\int_{\Gd}\mu_vH_1\nab p\cdot \nv\diff s\diff t\\
	%--------------------------------- time momentum
	-&\int_0^T\int_\Om\frac{\partial \rv}{\partial t}\cdot\Qv\diff x\diff t+\int_\Om \left[\Qv(x,T)\cdot\rv(x,T)-\Qv_0\cdot\rv(x,0)\right]\diff x\\
	%--------------------------------- 1. flux momentum
	-&\int_0^T\int_\Om\frac{\Qv}{H}\cdot\nab\rv\cdot\Qv\diff x\diff t+\int_0^T\int_{\Gd} \frac{\Qv}{H_1}\cdot\rv\Qv\cdot \nv\diff s\diff t\\
	%--------------------------------- 2. flux momentum
	+&\int_0^T\int_{\Ga,\Ge} \frac{1}{2}gH^2\rv\cdot \nv\diff s\diff t+\int_0^T\int_{\Gd} \frac{1}{2}gH_1^2\rv\cdot \nv\diff s\diff t\\
	%--------------------------------- viscous momentum
	-&\int_0^T\int_\Om \Qv\cdot
	\nab\cdot(G(\mu_f)\nab \rv)\diff x\diff t+\int_0^T\int_{\Ga,\Gd,\Ge}G(\mu_f)\Qv\nab \rv\cdot\nv\diff s\diff t\\
	%--------------------------------- source terms and additional for spacing
	+&\int_0^T\int_\Om gH\nab z\cdot\rv\diff x\diff t\text{.}
	\end{align*}
	Differentiating for the state variable $H$ leads to
	\begin{align*}
	\frac{\partial a(H,\Qv,p,\rv)}{\partial H}=&\int_0^T\int_\Om-\frac{\partial p}{\partial t}\diff x\diff t+\int_\Om p(x,T)\diff x\\
	%--------------------------------- viscous sediment
	+&\int_0^T\int_\Om-\nab\cdot(\mu_v\nab p)\diff x\diff t+\int_0^T\int_{\Ga,\Ge}\left[\mu_v\nab p\cdot \nv\right]\diff s\diff t\\
	%--------------------------------- 1. flux momentum
	+&\int_0^T\int_\Om\frac{\Qv}{H^2}\cdot\nab\rv\cdot\Qv\diff x\diff t\\
	%--------------------------------- 2. flux momentum
	+&\int_0^T\int_\Om-gH\nab\cdot\rv\diff x\diff t+\int_0^T\int_{\Ga,\Ge} gH\rv\cdot \nv\diff s\diff t\\
	%--------------------------------- source terms
	+&\int_0^T\int_\Om g\nab z\cdot\rv\diff x\diff t
	\end{align*}
	and for $\Qv$ to
	\begin{align*}
	%--------------------------------- time momentum
	\frac{\partial a(H,\Qv,p,\rv)}{\partial \Qv}=		&\int_0^T\int_\Om-\frac{\partial \rv}{\partial t}\diff x\diff t+\int_\Om \rv(x,T)\diff x\\
	%--------------------------------- flux sediment
	-&\int_0^T\int_\Om\nab p\diff x\diff t+\int_0^T\int_{\Gd}p\nv\diff s\diff t\\
	%--------------------------------- 1. flux momentum
	-&\int_0^T\int_\Om\frac{1}{H}(\nab\rv)^T\Qv-\frac{1}{H}(\Qv\cdot\nab)\rv\Qv\diff x\diff t\\
	%--------------------------------- 1. flux momentum boundary terms
	+&\int_0^T\int_{\Gd} \frac{1}{H_1}(\Qv\cdot \nv)\rv\diff s\diff t+\int_0^T\int_{\Gd} \frac{1}{H_1}(\Qv\rv) \cdot \nv\diff s\diff t\\
	%--------------------------------- viscous momentum
	+&\int_0^T\int_\Om-\nab\cdot(G(\mu_f)\nab\rv)\diff x\diff t+\int_0^T\int_{\Ga,\Gd,\Ge}G(\mu_f)\nab\rv\nv\diff s\diff t\text{.}
	\end{align*}
	Now if $\frac{\partial a(H,\Qv,p,\rv)}{\partial U}=-\frac{\partial J_{1,2}}{\partial U}$ then $\frac{\partial\mathcal{L}}{\partial U}=0$ is fulfilled. From this we get the adjoint in strong form with boundary and terminal conditions (\ref{Eq:19Adjoint})-(\ref{Eq:20AdjointBC}).
\end{proof}
\newpage
\section{Derivation of Shape Derivative}\label{app:derivsha}
\begin{proof}
	We regard the Lagrangian (\ref{Eq:16LagSWE}). As in \cite{Schulz2014b}, the theorem of Correa and Seger \cite{Correa1985} is applied on the right hand side of 
	\begin{align}
	J_{1,2}(\Om)=\min_{U}\max_{P}\Lag(\Om,U,P).
	\label{Eq:191Saddle}
	\end{align}
	The assumptions of this theorem can be verified as in \cite{Delfour2011}. We now apply the rule \eqref{Eq:9MatDer2} for differentiating domain integrals, alongside with boundary conditions
	\allowdisplaybreaks
	\begin{align*}
	d\mathcal{L}&(\Om,U,P)=\\
	%---------------------------------------------
	&=\lim_{\eps\rightarrow 0^+}\frac{\mathcal{L}(\Om_\eps;U,P)-\mathcal{L}(\Om;U,P)}{\eps}\\
	%---------------------------------------------
	&=\frac{d^+}{d\eps}\restr{\mathcal{L}(\Om_\eps,U,P)}{\eps=0}=\frac{d^+}{d\eps}\restr{\mathcal{L}(\Om_\eps,H,\Qv,p,\rv)}{\eps=0}\\
	%--------------------------------------------- Insert Material Derivative 1.1. td H
	&=\int_\Om\Big[\int_0^T-D_m\left(\frac{\partial p}{\partial t}H\right)\diff t+ D_m\left(H(x,T)p(x,T)-H_0p(x,0)\right)\\
	%--------------------------------------------- 1.1 td Q 
	&-\int_0^TD_m\left(\frac{\partial \rv}{\partial t}\cdot\Qv\right)\diff t+ D_m\left(\Qv(x,T)\cdot\rv(x,T)-\Qv_0\cdot\rv(x,0)\right)\\
	%--------------------------------------------- 
	&+\int_0^T D_m\left(\nab\cdot\Qv p\right)\diff t+\int_0^TD_m\left(\mu_v\nab (H+z)\cdot\nab p\right)\diff t\\
	%---------------------------------------------
	&+\int_{0}^TD_m\left(\nab\cdot\left(\frac{\Qv}{H}\otimes \Qv\right)\cdot\rv\right)\diff t+
	\int_0^T+D_m\left(\frac{1}{2}g\nab H^2\cdot\rv\right)\diff t\\
	%---------------------------------------------
	&+\int_0^TD_m\left(G(\mu_f)\nab \Qv:\nab\rv\right)\diff t+\int_0^TD_m\left(gH\nab z\cdot\rv\right)\diff t
	\\
	%--------------------------------------------- div terms
	&+\Div(\Vv)\Big(\int_0^T-\frac{\partial p}{\partial t}H\diff t+H(x,T)p(x,T)-H_0p(x,0)\\
	%--------------------------------------------- 
	&+\int_0^T-\frac{\partial \rv}{\partial t}\cdot\Qv\diff t+ \Qv(x,T)\cdot\rv(x,T)-\Qv_0\cdot\rv(x,0)+\int_0^T \nab\cdot\Qv p\diff t\\
	%---------------------------------------------
	&+\int_0^T\mu_v\nab (H+z)\cdot\nab p\diff t+\int_{0}^T\nab\cdot\left(\frac{\Qv}{H}\otimes \Qv\right)\cdot\rv\diff t\\
	%---------------------------------------------
	&+\int_0^T\frac{1}{2}g\nab H^2\cdot\rv\diff t+\int_0^TG(\mu_f)\nab \Qv:\nab\rv+\int_0^TgH\nab z\cdot\rv\diff t\Big)\Big]\diff x\\
	%--------------------------------------------- Boundary Terms
	&+\int_{\Ga}\Big[\int_{\tilde{T}}D_m\left(\nu_1E\sigma_\alpha(H-H_{\text{cr}})\right)\diff t+\int_0^TD_m\left(\frac{\nu_2}{2}||\Qv||_2^2\right)\diff t\\
	%--------------------------------------------
	&+\Div_{\Ga}(\Vv)\left(\int_{\tilde{T}}\nu_1E\sigma_\alpha(H-H_{\text{cr}})\diff t+\int_0^T\frac{\nu_2}{2}||\Qv||_2^2\diff t\right)\Big]\diff s
	\\
	%---------------------------------------------
	&+\int_{\Gd}\Big[\int_0^T-D_m\left(\mu_v\nab (H_1+z) \cdot \nv p\diff t\right)\\
	&+\Div_{\Gd}(\Vv)\Big(\int_0^T-\mu_v\nab (H_1+z)\cdot \nv p\diff t\Big)\Big]\diff s\text{,}
	\end{align*}
	where $\Div_{\Gamma}\Vv=\Div\Vv-\nv\cdot(\nab\Vv)\nv$ is the tangential divergence of the vector field $\Vv$.
	Now the product rule (\ref{Eq:10MatProdR}) yields
	\allowdisplaybreaks
	\begin{align*}
	%--------------------------------------------- Insert Product Rule and so on
	\quad\quad\quad=&\int_\Om\Big[\int_0^T-D_m\left(\frac{\partial p}{\partial t}\right)H-\frac{\partial p}{\partial t}\dot{H}\diff t \\
	%---------------------------------------------
	+&\dot{H}(x,T)p(x,T)+H(x,T)\dot{p}(x,T)-H_0\dot{p}(x,0)\\
	%--------------------------------------------- 1.1 td Q 
	+&\int_0^TD_m\left(\frac{\partial \rv}{\partial t}\right)\cdot\Qv-\frac{\partial \rv}{\partial t}\cdot\dot{\Qv}\diff t+ \dot{\Qv}(x,T)\cdot\rv(x,T)\\
	%--------------------------------------------- 
	+&\Qv(x,T)\cdot\dot{\rv}(x,T)-\Qv_0\cdot\dot{\rv}(x,0)+
	\int_0^T \dot{p}\cdot\nab\cdot \Qv+p D_m(\nab\cdot \Qv)\diff t\\
	%--------------------------------------------- 
	+&\int_0^T\left(\mu_vD_m(\nab (H+z))\cdot\nab p+\mu_v\nab (H+z)\cdot D_m(\nab p)\right)\diff t\\
	%---------------------------------------------
	-&\int_{0}^TD_m\left(\nab\cdot\left(\frac{\Qv}{H}\otimes \Qv\right)\right)\cdot\rv\diff t+\int_{0}^T\nab\cdot\left(\frac{\Qv}{H}\otimes \Qv\right)\cdot D_m\left(\rv\right)\diff t\\
	%---------------------------------------------
	+& \int_0^T\left(\frac{1}{2}gD_m(\nab H^2)\cdot\rv+\frac{1}{2}g\nab H^2\cdot D_m(\rv)\right)\diff t
	\\
	%---------------------------------------------
	+&\int_0^T\left(D_m\left(G(\mu_f)\nab \Qv\right):\nab\rv+G(\mu_f)\nab \Qv:D_m\left(\nab\rv\right)\right)\diff t\\
	%---------------------------------------------
	+&\int_0^Tg\dot{H}\nab z\cdot\rv\diff t+\int_0^TgHD_m(\nab z)\cdot\rv\diff t+\int_0^TgH\nab z\cdot\dot{\rv}\diff t\\
	%--------------------------------------------- div terms
	+&\quad \Div(\Vv)\Big(\int_0^T-\frac{\partial p}{\partial t}H\diff t+H(x,T)p(x,T)-H_0p(x,0)\\
	%--------------------------------------------- 
	+&\int_0^T-\frac{\partial \rv}{\partial t}\cdot\Qv\diff t+ \Qv(x,T)\cdot\rv(x,T)-\Qv_0\cdot\rv(x,0)+\int_0^T p\nab\cdot\Qv\diff t\\
	%---------------------------------------------
	+&\int_0^T\mu_v\nab (H+z)\cdot\nab p\diff t+\int_{0}^T\nab\cdot\left(\frac{\Qv}{H}\otimes \Qv\right)\cdot\rv\diff t\\
	%---------------------------------------------
	+&\int_0^T+\frac{1}{2}g\nab H^2\cdot\rv\diff t+\int_0^TG(\mu_f)\nab \Qv:\nab\rv+\int_0^TgH\nab z\cdot\rv\diff t\Big)\Big]\diff x\\
	%--------------------------------------------- Boundary Terms
	+&\int_{\Ga}\Big[\int_{\tilde{T}}\nu_1\left(\frac{1}{4}g\rho
	H\sigma_{\alpha,H_{\text{cr}}}(H)+E\sigma_{\alpha,H_{\text{cr}}}(H)(1-\sigma_{\alpha,H_{\text{cr}}}(H))\right)\dot{H}\diff t\\
	+&\int_0^T\nu_2\Qv\cdot\dot{\Qv}\diff t\\
	+&\quad \Div_{\Ga}(\Vv)\left(\int_{\tilde{T}}\nu_1E\sigma_{\alpha,H_{\text{cr}}}(H)\diff t+\int_0^T\frac{\nu_2}{2}||\Qv||_2^2\diff t\right)\Big]\diff s\\
	%---------------------------------------------
	+&\int_{\Gd}\Big[\int_0^T-\mu_v\nab (H_1+z)\cdot \nv\dot{p}\diff t+ \Div_{\Gd}(\Vv)\Big(\int_0^T-\mu_v\nab (H_1+z)\cdot \nv p\diff t\Big)\Big]\diff s\text{.}
	\end{align*}
	The non-commuting of the material derivative (\ref{Eq:11MatGradR}), (\ref{Eq:11MatGradRvec}) and (\ref{Eq:12MatGradProdR}) such as integration by parts, regrouping and the fact that the sediment moves along with the deformation leads to
	\allowdisplaybreaks
	\begin{align*}
	%--------------------------------------------- Part that vanishes due to adjoint for H derivative
	\quad\quad=&\int_{\Ga}\Big[\int_{\tilde{T}}\left(\frac{1}{4}g\rho
	H\sigma_{\alpha,H_{\text{cr}}}(H)+E\sigma_{\alpha,H_{\text{cr}}}(H)(1-\sigma_{\alpha,H_{\text{cr}}}(H))\right)\dot{H}\diff t\\
	+&\int_0^T\nu_2\Qv\cdot\dot{\Qv}\diff t\Big]\diff s\\
	+&\int_\Om\Big[\int_0^T\left(-\frac{\partial p}{\partial t}+\frac{1}{H^2}(\Qv\cdot\nab)\rv\cdot\Qv-gH(\nab\cdot\rv) -\nab\cdot(\mu_v\nab p)+g\nab z\cdot \rv\right)\dot{H}\\
	%--------------------------------------------- Part that vanishes due to adjoint for Q derivative
	+&\left(-\frac{\partial\rv}{\partial t}-\nab p-\frac{1}{H}(\Qv\cdot\nab)\rv-\frac{1}{H}(\nab\rv)^T\Qv-(\nab\cdot(G(\mu_f)\nab \rv))\right)\cdot\dot{\Qv}\\	
	%--------------------------------------------- Part that vanishes due to state equation for mass conservation
	+&\left(\frac{\partial H}{\partial t}+\nab\cdot\left(\Qv-\mu_v\nab (H+z)\right)\right)\dot{p}\\
	%---------------------------------Part that vanishes due to state equation for moment conservation
	+&\left(\frac{\partial \Qv}{\partial t}+\nab\cdot\left(\frac{\Qv}{H}\otimes \Qv+\frac{1}{2}gH^2\mathbf{I}_2-G(\mu_f)\nab\Qv\right)+gH\nab z\right)\cdot\dot{\rv}\diff t\Big]\diff x\\
	%---------------------------------Part that doesnt vanish for Shape Derivative
	+&\int_{\Om}\int_{0}^{T}\Big[-(\nab \Vv)^T:\nab \Qv p - (\nab \Vv)^T:\nab \Qv\frac{\Qv}{H}\cdot \rv- (\nab \Vv\Qv\cdot\nab)\frac{\Qv}{H}\cdot \rv \\ 
	-& gH(\nab \Vv)^T\nab H \cdot \rv-\mu_v\nab (H+z)^T(\nab \Vv +\nab \Vv^T)\nab p  \\
	-&G(\mu_f)\nab \Qv \nab \Vv:\nab \rv -G(\mu_f)\nab\Qv\nab \Vv^T:\nab\rv  \\
	-&gH\nab \Vv^T\nab z\cdot\rv+\Div(\Vv)\Big\{\frac{\partial H}{\partial t}p+\nab\cdot \Qv p+\frac{\partial \Qv}{\partial t}\cdot \rv\\
	+& (\Qv\cdot \nab)\frac{\Qv}{H}\cdot \rv +\nab\cdot \Qv\frac{\Qv}{H}\cdot \rv +  \frac{1}{2}g\nab H^2\cdot \rv +gH\nab z\cdot\rv\\
	+&\mu_v\nab (H+z)\cdot\nab p +(G(\mu_f)\nab \Qv): \nab \rv \Big\}\Big]\diff x\diff t\\
	%--------------------------------------------- Boundary Terms
	+&\int_{\Ga}\Div_{\Ga}(\Vv)\Big[\int_{\tilde{T}}\nu_1E\sigma_{\alpha,H_{\text{cr}}}(H)\diff t+\int_0^T\frac{\nu_2}{2}||\Qv||_2^2\diff t \Big]\diff s\\
	%---------------------------------------------
	+& \int_{\Gd}\Div_{\Gd}(\Vv)\Big[\int_0^T-\mu_v\nab (H_1+z)\cdot \nv p\diff t\Big]\diff s\text{.}
	\end{align*}
	Since outer boundaries are not variable, in general the deformation field $\Vv$ vanishes in small neighbourhoods around $\Ga, \Gd$ and the material derivative is zero, hence the boundary integrals vanish. In addition, evaluating the Lagrangian in its saddle point, the 
	first integrals vanish such that we obtain the shape derivative in its final form.
\end{proof}

\end{document}